\documentclass{amsart}
\usepackage[T1]{fontenc}
\usepackage{amsmath,amsthm,amsfonts}
\usepackage{amssymb, amsmath, amsthm, amsgen, amstext, amsbsy, amsopn}
\usepackage{amsfonts}

\newtheorem{thm}{Theorem}[section]
\newtheorem{lemma}[thm]{Lemma}

\newtheorem{cor}[thm]{Corollary}
\theoremstyle{definition}
\newtheorem{defin}[thm]{Definition}
\newtheorem{rem}[thm]{Remark}

\newtheorem{cond}[thm]{Condition}
\newtheorem*{notations}{Notations}
\newtheorem*{acknowledgements}{Acknowledgements}



\def\eps{\epsilon}

\def\part{\partial}

\def\const{{\rm const}}

\newcommand{\hG}{{\widehat{\Gamma}}}

\newcommand{\Lip}{{\operatorname{Lip}}}
\newcommand{\re}{\operatorname{Re}}
\newcommand{\im}{\operatorname{Im}}


\newcommand{\R}{{\mathbb{R}}}

\newcommand{\T}{{\mathbb{T}}}
\newcommand{\Z}{{\mathbb{Z}}}
\newcommand{\N}{{\mathbb{N}}}

\newcommand{\hl}{{\widehat{\ell}}}

\newcommand{\id}{{\text{{\bf 1}}}}

\subjclass{37A05, 37A25, 37C05}

\keywords{Growth rate of homeomorphism, the rate of mixing}

\title{Growth and mixing }

\author[K.\ Fr\k{a}czek]{Krzysztof Fr\k{a}czek}
\address{Faculty of Mathematics and Computer Science,
Nicolaus Copernicus University, Chopina 12/18,  87-100 Toru\'n,
Poland, Institute of Mathematics Polish Academy of Science,
\'Sniadeckich 8, 00-956 Warszawa, Poland}
\email{fraczek@mat.uni.torun.pl}
\thanks{Research partially supported by Marie Curie "Transfer of Knowledge" program,
project MTKD-CT-2005-030042 (TODEQ)}

\author[L.\ Polterovich]{Leonid Polterovich}
\address{School of Mathematical Sciences,
 Tel Aviv University,
 Tel Aviv 69978, Israel}
 \email{polterov@post.tau.ac.il}
\thanks{}


\date{\today}
\begin{document}

\begin{abstract}
Given a bi-Lipschitz measure-preserving homeomorphism of a compact
metric measure space of finite dimension, consider the sequence
formed by the Lipschitz norms of its iterations. We obtain lower
bounds on the growth rate of this sequence assuming that our
homeomorphism mixes a Lipschitz function. In particular, we get a
universal lower bound which depends on the dimension of the space
but not on the rate of mixing. Furthermore, we get a lower bound
on the growth rate in the case of rapid mixing. The latter turns
out to be sharp: the corresponding example is given by a symbolic
dynamical system associated to the Rudin-Shapiro sequence.
\end{abstract}

\maketitle

\section{Introduction and main results} \label{intro}
Let $(M,\rho,\mu)$ be a compact metric space endowed with a
probability Borel measure $\mu$ with $\text{supp}(\mu)=M$. Denote
by $G$ the group of all bi-Lipschitz homeomorphisms of $(M,\rho)$
which preserve the measure $\mu$. For $\phi \in G$ write
$\Gamma(\phi)=\Gamma_\rho(\phi)$ for the maximum of the Lipschitz
constants of $\phi$ and $\phi^{-1}$. Note that $\Gamma(\phi)$ is a
sub-multiplicative: $\Gamma(\phi\psi) \leq
\Gamma(\phi)\cdot\Gamma(\psi)$. Thus $\log \Gamma$ is a
pseudo-norm on $G$, which enables us to consider the group $G$ as
a geometric object. In the present note we discuss a link between
dynamics of $\phi \in G$ (the rate of mixing) and geometry of the
cyclic subgroup of $G$ generated by $\phi$ (the growth rate of
$\Gamma(\phi^n)$ as $n \to \infty$.) On the geometric side, we
focus on the quantity
$$\hG _n(\phi) := \max_{i=1,\ldots,n}
\Gamma(\phi^i)\;.$$

\begin{notations} We write $(f,g)_{L_2}$ for the $L_2$-scalar
product on $L_2(M,\mu)$. We denote by $E$ the space of all
Lipschitz functions on $M$ with zero mean with respect to $\mu$.
We write $||f||_{L_2}$ for the $L_2$-norm of a function $f$,
$\Lip(f)$ for the Lipschitz constant of $f$ and $||f||_{\infty}$
for the uniform norm of $f$.
\end{notations}

\begin{defin}\label{def-mix} We say that a diffeomorphism $\phi \in G$ {\it mixes} a function $f
\in E$ if $(f \circ \phi^n, f)_{L_2} \to 0$ as $n \to \infty$.
\end{defin}

\medskip
\noindent It is known that there exist volume-preserving
diffeomorphisms $\phi$ of certain smooth closed manifolds $M$ with
arbitrarily slow growth of $\hG_n(\phi),  n \to \infty$ (see e.g.
Borichev \cite{Borichev} for $M = \T^2$ and Fuchs \cite{Fuchs} for
extension of Borichev's results to manifolds admitting an effective
$\T^2$-action). As we shall see below, the situation changes if we
assume that $\phi$ mixes a Lipschitz function: in this case the
growth rate of $\hG_n(\phi)$ admits a universal lower bound.
Furthermore the bound becomes better provided the rate of mixing is
decaying sufficiently fast.

To state our first result we need the following invariant of the
metric space $(M,\rho)$. Denote by $E_{R,C}$, where $ R,C \geq 0$,
the subset of functions $f \in E$ with $\Lip(f) \leq R$ and
$\|f\|_{\infty}\leq C$. By the Arzela-Ascoli theorem $E_{R,C}$ is
compact with respect to the uniform norm. Denote by
$D(R,\epsilon,C)$ the minimal number of $\epsilon/2$-balls (in the
uniform norm) needed to cover $E_{R,C}$. Note that for fixed
$\epsilon$ and $C$ the function $D(R,\epsilon,C)$ is non-decreasing
with $R$. For $t\geq D(0,1.4,C)= [C/0.7]+1$ put
$$\tau(t,C) := \sup\{R \geq 0\;:\; D(R, 1.4,C) \leq t\}\;.$$

\begin{thm} \label{thm-mixing} Assume that a bi-Lipschitz homeomorphism
$\phi \in G$ mixes a function $f \in E$ with $||f||_{L_2}=1$. Then
there exists $\alpha>0$ so that $$\hG_n(\phi) \geq
\frac{\tau(\alpha n,\|f\|_{\infty})}{\Lip(f)}$$ for all
sufficiently large $n$.
\end{thm}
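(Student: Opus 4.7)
The plan is to argue by contradiction: if $\hG_n(\phi)$ were smaller than the claimed bound, then every iterate $f\circ\phi^i$ with $0\le i\le n$ would lie in a set $E_{R,\|f\|_\infty}$ with small covering number, while mixing would force these iterates to be well-separated in $L_\infty$ whenever their indices are sufficiently apart, giving a pigeonhole contradiction.

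First I would extract from the mixing hypothesis an integer $N_0$ such that $|(f\circ\phi^k,f)_{L_2}|<0.02$ for all $k\ge N_0$. Using that $\phi$ preserves $\mu$ and $\|f\|_{L_2}=1$, one has the identity
$$\|f\circ\phi^i-f\circ\phi^j\|_{L_2}^2=2-2(f\circ\phi^{i-j},f)_{L_2},$$
so together with the probability-measure inequality $\|\cdot\|_{L_2}\le\|\cdot\|_\infty$ one concludes that whenever $i-j\ge N_0$,
$$\|f\circ\phi^i-f\circ\phi^j\|_\infty>\sqrt{2-2\cdot 0.02}>1.4.$$
Equivalently, any two iterates sharing a uniform ball of radius $0.7$ must have indices differing by strictly less than $N_0$.

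Next, I would set $\alpha:=1/(2N_0)$ and assume, toward contradiction, that $\hG_n(\phi)<\tau(\alpha n,\|f\|_\infty)/\Lip(f)$ for some $n$ large enough that $\alpha n\ge D(0,1.4,\|f\|_\infty)$. Write $R:=\Lip(f)\cdot\hG_n(\phi)$. Since $R<\tau(\alpha n,\|f\|_\infty)$, the supremum definition of $\tau$ together with the monotonicity of $D(\cdot,1.4,\|f\|_\infty)$ in its first argument yields $D(R,1.4,\|f\|_\infty)\le\alpha n$. Measure preservation makes each $f\circ\phi^i$ mean-zero with $\|f\circ\phi^i\|_\infty=\|f\|_\infty$, while sub-multiplicativity of $\Gamma$ gives $\Lip(f\circ\phi^i)\le\Lip(f)\cdot\Gamma(\phi^i)\le R$ for every $0\le i\le n$. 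Hence all $n+1$ iterates belong to $E_{R,\|f\|_\infty}$ and must be distributed into a cover by at most $\alpha n$ uniform balls of radius $0.7$; by the previous paragraph each such ball contains at most $N_0$ of them, so $n+1\le\alpha n\cdot N_0=n/2$, a contradiction for all large $n$.

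The argument is essentially elementary once set up; the main point requiring care is the numerical calibration of the covering constant $1.4$ so that $1.4^2<2=2\|f\|_{L_2}^2$, which is precisely what turns the implication ``close in $L_\infty$ $\Rightarrow$ close in $L_2$ $\Rightarrow$ correlation bounded below by a fixed positive constant'' into something the mixing hypothesis can contradict. Matching this constant to the threshold defining $\tau$ (and its monotonicity in $R$) is the step I expect to cost the most attention in the clean write-up.
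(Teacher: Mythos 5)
Your proof is correct and follows essentially the same strategy as the paper's: pack the iterates $f\circ\phi^i$ into the Arzela--Ascoli-compact set $E_{R,\|f\|_\infty}$, use the covering bound encoded in $\tau$, and translate $L_\infty$-proximity into a correlation lower bound via the polarization identity together with $\|\cdot\|_{L_2}\le\|\cdot\|_\infty$. The bookkeeping differs slightly: the paper applies the pigeonhole principle to the $k$-spaced sub-family $\{f\circ\phi^{ik}\}_{i=0}^{[n_k/k]}$, finds one collision per $k$, and lets $k\to\infty$ to contradict mixing, whereas you first extract a quantitative mixing threshold $N_0$, observe that each covering ball of radius $0.7$ can contain at most $N_0$ members of the full family $\{f\circ\phi^{i}\}_{i=0}^n$, and close with a single count $n+1\le\alpha n N_0 = n/2$. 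Your variant is marginally more explicit in that it produces $\alpha = 1/(2N_0)$ directly rather than via a subsequence argument, but both proofs use the same ingredients in the same roles.
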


\medskip
\noindent The proof is given in Section \ref{sec-proof-mix}.

\medskip
\noindent For a compact subset $A$ of a metric space $(X,\rho_1)$
and $\epsilon>0$ denote by $\mathcal{N}_{\epsilon}(A)$ the minimal
number of open balls with radius $\epsilon/2$ such that their
union covers $A$. Then the upper box dimension of $(A,\rho_1)$ is
defined as
\begin{equation}\label{def-box}
\overline{\dim}_B(A)=\overline{\lim_{\epsilon\to
0}}\frac{\log\mathcal{N}_{\epsilon}(A)}{\log1/\epsilon}.
\end{equation}
 Let $(Y,\rho_2)$ be a compact metric space and let
$\mathcal{D}^A_R(Y)\subset Y^A$ stand for the set of Lipschitz
functions $f:A\to Y$ with $\Lip(f)\leq R$, where $Y^A$ is equipped
with the uniform distance
$$\text{dist}(f,g) = \sup_{x \in A} \rho_2(f(x),g(x))\;.$$
It is easy to show (the proof is analogous to that of Theorem~XXV in
\cite{Kol-Tih})
\begin{equation}\label{Ko-Ti-ineq}\mathcal{N}_{\epsilon}(\mathcal{D}^A_R(Y))\leq
\mathcal{N}_{\epsilon/4}(Y)^{\mathcal{N}_{\epsilon/(4R)}(A)}.
\end{equation}
For the reader's convenience, we present the proof in the
Appendix.

\medskip

Assume now that the metric space $(M,\rho)$ satisfies the following
condition:

\begin{cond}\label{cond-met}
 There exist positive numbers $d$ and $\kappa$ so that for every
$\delta>0$ one can find a $\delta$-net in $(M,\rho)$ consisting of
at most $\kappa \cdot\delta^{-d}$ points.
\end{cond}

\medskip
\noindent This condition is immediately verified if $(M,\rho)$ is a
smooth manifold of dimension $d$ or if $d>\overline{\dim}_B(M)$.
Moreover, it is satisfied for some fractal sets $M\subset\R^n$ where
$d$ is the fractal dimension $M$, e.g.\ if $M$ is a self--similar
set (see Theorem~9.3 \cite{Falconer}).

In what follows $[\alpha]$ denotes the integer part of $\alpha \in
\R$.  Assume that Condition \ref{cond-met} holds. Since
$E_{R,C}=\mathcal{D}^M_R([-C,C])$, by (\ref{Ko-Ti-ineq}), we have
\[D(R,\epsilon,C)\leq
\left(\left[\frac{4C}{\epsilon}\right]+1\right)^{\mathcal{N}_{\epsilon/(4R)}(M)}\\
\leq
\left(\left[\frac{4C}{\epsilon}\right]+1\right)^{\kappa(\epsilon/(4R))^{-d}}.\]
Therefore
$\tau(t,C)\geq \text{const} \cdot\log^{1/d}t.$ Thus Theorem
\ref{thm-mixing} above yields the following:

\begin{cor}\label{cor-log}
If $\phi \in G$ mixes a Lipschitz function then there exists
$\lambda > 0$ so that
$$\hG_n(\phi) \geq \lambda \cdot \log^{\frac{1}{d}} n$$
for all sufficiently large $n$.
\end{cor}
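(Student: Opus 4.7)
The plan is essentially to combine Theorem~\ref{thm-mixing} with the dimension-dependent upper bound on $D(R,\epsilon,C)$ derived just before the statement, so the corollary reduces to inverting that bound to obtain a lower bound on $\tau(t,C)$.

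First I would normalize: if $\phi$ mixes some Lipschitz $f \in E$, I can replace $f$ by $f/\|f\|_{L_2}$, which is still Lipschitz and lies in $E$ with $L_2$-norm one. Theorem~\ref{thm-mixing} then supplies a constant $\alpha>0$ and an $n_0$ with
\[
\hG_n(\phi) \;\geq\; \frac{\tau(\alpha n,\|f\|_\infty)}{\Lip(f)} \qquad \text{for } n \geq n_0,
\]
so the whole task is to produce a lower bound of the form $\tau(t,\|f\|_\infty) \geq c\,\log^{1/d} t$.

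Next I would use Condition~\ref{cond-met} via the Kolmogorov--Tikhomirov-type inequality (\ref{Ko-Ti-ineq}), exactly as the excerpt does: with $\epsilon = 1.4$ and $C = \|f\|_\infty$,
\[
D(R,1.4,C) \;\leq\; \Bigl(\bigl[\tfrac{4C}{1.4}\bigr]+1\Bigr)^{\kappa\,(4R/1.4)^d}.
\]
Taking logarithms, the condition $D(R,1.4,C) \leq t$ is implied by
\[
\kappa\,(4R/1.4)^d \,\log\!\Bigl(\bigl[\tfrac{4C}{1.4}\bigr]+1\Bigr) \;\leq\; \log t,
\]
which rearranges to an explicit upper bound on $R$ of the shape $R \leq c_1(C,\kappa,d)\,\log^{1/d} t$. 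By the definition of $\tau$ this gives
\[
\tau(t,C) \;\geq\; c_1(C,\kappa,d)\,\log^{1/d} t
\]
for all $t \geq D(0,1.4,C)$.

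Finally I would substitute $t = \alpha n$ and $C = \|f\|_\infty$ into the bound from Theorem~\ref{thm-mixing}. Since $\log^{1/d}(\alpha n) \sim \log^{1/d} n$ as $n \to \infty$, absorbing $\alpha$, $\Lip(f)$ and $c_1$ into a single constant $\lambda>0$ yields $\hG_n(\phi) \geq \lambda \log^{1/d} n$ for all sufficiently large $n$. There is no real obstacle here — everything needed is in place; the only care required is to check that $\alpha n \geq D(0,1.4,\|f\|_\infty)$ eventually, so that $\tau(\alpha n, \|f\|_\infty)$ is in the regime where the lower bound applies, which is automatic for large $n$.
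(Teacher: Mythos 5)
Your proposal is correct and follows exactly the paper's own route: normalize $f$ to unit $L_2$-norm, apply Theorem~\ref{thm-mixing}, and then use the Kolmogorov--Tihomirov bound \eqref{Ko-Ti-ineq} together with Condition~\ref{cond-met} to invert $D(R,1.4,C) \leq t$ into $\tau(t,C) \geq \const \cdot \log^{1/d} t$. The paper compresses the inversion step into a single sentence; you have merely spelled it out, which is fine.
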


\medskip

This contrasts sharply with the situation when the growth of the
sequence $\Gamma(\phi^n)$ is taken under consideration. In fact, for
every slowly increasing function $u:[0;+\infty)\to[0;+\infty)$ there
exists a volume-preserving real-analytic diffeomorphism of the
$3$--torus which mixes a real-analytic function and such that
$\Gamma(\phi^n)\leq\const\cdot u(n)$ for infinitely many $n$. Such
diffeomorphisms are presented in Section~\ref{slowgrowth}.

\medskip

As a by-product of our proof of Theorem \ref{thm-mixing}  we get the
following result. Let $\phi$ be a bi-Lipschitz homeomorphism of a
compact metric space $M$ satisfying Condition \ref{cond-met}.

\begin{thm} \label{thm-recfin}
If
\begin{equation}
\label{eq-rigid}
\liminf_{n\to\infty}\frac{\hG_n(\phi)}{\log^{1/d}n}=0 \end{equation}
then the cyclic subgroup $\{\phi^n\}$ has the identity map as its
limit point with respect to $C^0$-topology.
\end{thm}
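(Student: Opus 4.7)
The plan is to exploit the fact that the iterates $\phi^1,\ldots,\phi^n$ all live in a compact metric space of Lipschitz self-maps of $M$ whose $\epsilon$-covering number is, by Condition \ref{cond-met} and the Kolmogorov--Tikhomirov type bound (\ref{Ko-Ti-ineq}), only exponential in $\hG_n(\phi)^d$. Under hypothesis (\ref{eq-rigid}) this covering number is eventually much smaller than $n$, so pigeonhole forces two iterates to be $C^0$-close, which (because $\phi$ is a bijection) upgrades to an iterate being close to $\id$.

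More concretely, by (\ref{eq-rigid}) I would first extract a subsequence $n_k\to\infty$ along which $R_k^d/\log n_k\to 0$, where $R_k:=\hG_{n_k}(\phi)$. By the very definition of $\hG_n$, each of the maps $\phi^1,\ldots,\phi^{n_k}$ lies in the compact space $\mathcal{D}^M_{R_k}(M)\subset M^M$. Fixing an arbitrary $\epsilon>0$, I would apply (\ref{Ko-Ti-ineq}) with $Y=A=M$ and combine it with Condition \ref{cond-met} to get
$$\mathcal{N}_{\epsilon}\bigl(\mathcal{D}^M_{R_k}(M)\bigr)\leq \mathcal{N}_{\epsilon/4}(M)^{\mathcal{N}_{\epsilon/(4R_k)}(M)}\leq \exp\bigl(C(\epsilon,d,\kappa)\cdot R_k^d\bigr).$$
Since $R_k^d=o(\log n_k)$, this covering number is eventually strictly less than $n_k$, so by pigeonhole there exist indices $1\leq i_k<j_k\leq n_k$ with $\operatorname{dist}(\phi^{i_k},\phi^{j_k})<\epsilon$ in the uniform metric on $M^M$.

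The last step is the observation that because $\phi^{i_k}$ is a bijection of $M$, the substitution $y=\phi^{i_k}(x)$ gives
$$\sup_{x\in M}\rho\bigl(\phi^{i_k}(x),\phi^{j_k}(x)\bigr)=\sup_{y\in M}\rho\bigl(y,\phi^{j_k-i_k}(y)\bigr),$$
so the positive iterate $\phi^{j_k-i_k}$ is itself $\epsilon$-close to $\id$ in $C^0$. As $\epsilon>0$ was arbitrary and $j_k-i_k\neq 0$, the identity is a $C^0$-limit point of the cyclic subgroup $\{\phi^n\}$. The only delicate point is step two: one must verify that combining (\ref{Ko-Ti-ineq}) with Condition \ref{cond-met} really produces the bound $\exp(\mathrm{const}\cdot R^d)$ (up to $\epsilon$-dependent constants), since this $R^d$ is what the hypothesis $\hG_{n_k}^d=o(\log n_k)$ is tailored to beat. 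Everything else is a direct application of the covering data already recorded in the introduction together with the bijectivity of $\phi^{i_k}$.
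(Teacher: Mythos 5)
Your proposal follows essentially the same route the paper takes: Arzela--Ascoli compactness of the space $\Lambda_R$ of $R$-Lipschitz self-maps, the Kolmogorov--Tihomirov estimate \eqref{Ko-Ti-ineq} together with Condition \ref{cond-met} to bound its covering number by $\exp(\mathrm{const}_\epsilon\cdot R^d)$, a pigeonhole comparison against the $n_k$ iterates, and the bijectivity observation that $\operatorname{dist}(\phi^{i},\phi^{j})=\operatorname{dist}(\id,\phi^{j-i})$. The paper packages the same argument contrapositively as a quantitative lower bound (Theorem \ref{thm-nonrig} plus Remark \ref{exam-recurr-mfds} and Corollary \ref{cor-reccur}), of which Theorem \ref{thm-recfin} is an immediate consequence.

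One small point where the paper is slightly more careful than your last step: it applies the pigeonhole to the arithmetic progression $\id,\phi^k,\ldots,\phi^{km_k}$ rather than to $\phi^1,\ldots,\phi^{n_k}$, so the return time $l_k=(p-q)k\geq k$ automatically tends to infinity, guaranteeing a genuine accumulation of distinct iterates at the identity. With your choice of $\phi^1,\ldots,\phi^{n_k}$ the difference $j_k-i_k$ is only $\geq 1$, and ``$\epsilon$ arbitrary, $j_k-i_k\neq 0$'' does not by itself prevent the same small power from recurring. The fix is easy (if $\phi$ is periodic the conclusion is trivial; otherwise $\operatorname{dist}(\phi^m,\id)>0$ for each fixed $m$, so the returning powers must be unbounded as $\epsilon\to 0$), but it should be said. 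Everything else in your argument, including the exponential-in-$R^d$ covering bound you flag as the ``delicate point,'' checks out exactly as the paper records in Remark \ref{exam-recurr-mfds}.
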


\medskip
\noindent This theorem has the following application to bi-Lipschitz
ergodic theory (the next discussion is stimulated by correspondence
with A.~Katok). Let $T$ be an automorphism of a probability space
$(X,\sigma)$. A {\it bi-Lipschitz realization} of $(X,T,\sigma)$ is
a metric isomorphism between $(X,T,\sigma)$ and $(M,\phi,\mu)$,
where $\phi$ is a bi-Lipschitz homeomorphism of a compact metric
space $M$ equipped with a Borel probability measure $\mu$. An
objective of bi-Lipschitz ergodic theory is to find restrictions on
bi-Lipschitz realizations of various classes of dynamical systems
$(X,T,\sigma)$. The class of interest for us is given by {\it
non-rigid automorphisms} which is defined as follows: Denote by
$U_T$ the induced Koopman operator $f \mapsto f \circ T$ of
$L_2(X,\sigma)$. We say that $T$ is {\it non-rigid} \cite{Katok} if
the closure of the cyclic subgroup generated by $U_T$ with respect
to strong operator topology does {\bf not} contain the identity
operator. Theorem \ref{thm-recfin} shows that {\it any bi-Lipschitz
homeomorphism $\phi$ satisfying condition \eqref{eq-rigid} cannot
serve as a bi-Lipschitz realization of a non-rigid dynamical
system.}

\medskip

Let us return to the study of the interplay between growth and
mixing: Next we explore the influence of the rate of mixing on the
growth of $\hG_n(\phi)$. We shall need the following definitions.

\medskip
\noindent
\begin{defin}\label{def-mix-sp}  Let $\{a_n\}_{n \in \N} $ be a
sequence of positive numbers converging to zero as $n \to \infty$.
We say that a diffeomorphism $\phi \in G$ {\it mixes} a function $f
\in E$ at the rate $\{a_n\}$  if $$|(f \circ \phi^n, f)_{L_2}|\leq
a_n\; \forall n \in \N.$$
\end{defin}

\medskip
\noindent Given a positive sequence $a_n \to 0$, we call a positive
integer sequence $\{v(n)\}$ {\it adjoint} to $\{a_n\}$ if the
following conditions hold:
\begin{equation}\label{eq-adjoint-1}
\sum_{i: 0 < iv(n) \leq n} a_{iv(n)} \leq \frac{1}{4}\;,
\end{equation}
and
\begin{equation}\label{eq-adjoint-2}
\frac{n}{v(n)} \to \infty \;\;\text{as}\;\;n \to \infty\;.
\end{equation}

\begin{lemma}\label{lem-adjoint}
Every positive sequence $a_n \to 0$ admits an adjoint sequence.
\end{lemma}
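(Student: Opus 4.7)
The plan is to exploit the fact that $a_n\to 0$ to choose $v(n)$ large enough that every sampled term $a_{iv(n)}$ is controlled, while keeping $v(n)=o(n)$. The key observation is that if one sets $b_k:=\sup_{j\geq k}a_j$, then $b_k\to 0$ by hypothesis and, for any positive integer $v$,
\[
\sum_{i:\,0<iv\leq n}a_{iv}\;\leq\;\lfloor n/v\rfloor\cdot b_v\;\leq\;\frac{n}{v}\,b_v,
\]
since $iv\geq v$ forces $a_{iv}\leq b_v$. Hence condition \eqref{eq-adjoint-1} is automatic whenever $v\geq 4nb_v$.

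Accordingly I would define
\[
v(n)\;:=\;\min\bigl\{v\geq 1\,:\, v\geq 4nb_v\bigr\}.
\]
The defining set is nonempty because $b_v\to 0$ forces $4nb_v\leq v$ for all sufficiently large $v$, so the minimum exists. By construction, either $v(n)>n$ (in which case the sum in \eqref{eq-adjoint-1} is empty) or $v(n)\leq n$ (in which case the displayed inequality bounds the sum by $(n/v(n))\,b_{v(n)}\leq 1/4$); either way \eqref{eq-adjoint-1} holds.

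It remains to check \eqref{eq-adjoint-2}. Given $\epsilon>0$, the hypothesis $b_k\to 0$ yields $b_{\lceil\epsilon n\rceil}\leq\epsilon/4$ for all large $n$, so $\lceil\epsilon n\rceil\geq 4n\,b_{\lceil\epsilon n\rceil}$, which shows that $\lceil\epsilon n\rceil$ belongs to the defining set of $v(n)$. Consequently $v(n)\leq\lceil\epsilon n\rceil$, and $\limsup_{n\to\infty}v(n)/n\leq\epsilon$; as $\epsilon$ was arbitrary, $v(n)/n\to 0$, which is equivalent to \eqref{eq-adjoint-2}.

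I expect the main difficulty to be the design of $v(n)$ itself, since the two adjoint conditions pull in opposite directions: \eqref{eq-adjoint-1} wants $v(n)$ large, while \eqref{eq-adjoint-2} wants $v(n)$ small relative to $n$. The trick reconciling them is to dominate the individual sampled values $a_{iv(n)}$ by the single tail supremum $b_{v(n)}$, and then to take $v(n)$ as small as possible subject to $v\geq 4nb_v$; this minimality is what drives $v(n)/n\to 0$.
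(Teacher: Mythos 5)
Your proof is correct and follows essentially the same strategy as the paper: replace $\{a_n\}$ by a non-increasing majorant $\{b_n\}$ converging to zero, define $v(n)$ as the smallest index at which $b_v/v$ drops below $1/(4n)$, and use monotonicity to bound the sum by $(n/v(n))\,b_{v(n)}\leq 1/4$. The only cosmetic differences are that you take the canonical tail supremum $b_k=\sup_{j\geq k}a_j$ where the paper builds an ad hoc piecewise-constant majorant, and your verification of \eqref{eq-adjoint-2} via $v(n)\leq\lceil\epsilon n\rceil$ is a bit cleaner than the paper's minimality argument, which implicitly relies on first checking $v(n)\to\infty$.
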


\medskip
\noindent The proof is given in Section \ref{sec-adjoint}.

\medskip
\noindent In the next theorem we assume that the metric space
$(M,\rho)$ satisfies Condition \ref{cond-met}.
\begin{thm} \label{thm-main}\label{exam-conv}
Assume that a bi-Lipschitz homeomorphism $\phi \in G$ mixes a
Lipschitz function $f\in E$ with $||f||_{L_2}=1$ at the rate
$\{a_n\}$. Then for every adjoint sequence $\{v(n)\}$ of $\{a_n\}$
we have
\begin{equation}\label{eq-main}
\hG_n (\phi) \geq   \frac{1}{2\kappa^{\frac{1}{d}}\Lip(f)} \cdot \Big{[}
\frac{n}{2v(n)}\Big{]}^{1/d} \;\;\forall n \in \N\;.
\end{equation}
In particular, if $\sum a_i < \infty$ then
\begin{equation} \label{eq-power}
\hG_n (\phi) \geq \text{const} \cdot n^{\frac{1}{d}}\;.
\end{equation}
\end{thm}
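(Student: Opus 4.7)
The plan is to combine the adjoint property with Condition~\ref{cond-met} to show that a large family of iterates of $f$ is simultaneously approximately orthonormal in $L_2(M,\mu)$ and constrained to a finite-dimensional approximation space, thereby bounding the family's size.

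Set $R:=\Lip(f)\cdot\hG_n(\phi)$, $C:=\|f\|_\infty$, $N:=\lfloor n/(2v(n))\rfloor$, and $f_j:=f\circ\phi^{jv(n)}$ for $j=1,\ldots,N$. By measure preservation each $f_j$ has $\|f_j\|_{L_2}=1$, belongs to $E_{R,C}$, and satisfies $(f_i,f_j)_{L_2}=(f,f\circ\phi^{(j-i)v(n)})_{L_2}$, so $|(f_i,f_j)_{L_2}|\le a_{(j-i)v(n)}$. The adjoint inequality~(\ref{eq-adjoint-1}) controls each row of the Gram matrix $G$ of $\{f_j\}_{j=1}^N$:
\[
\sum_{j\neq i}|G_{ij}|\ \le\ 2\sum_{k:\,kv(n)\le n}a_{kv(n)}\ \le\ \tfrac12\quad\text{for every }i,
\]
so Gershgorin's theorem gives $\lambda_{\min}(G)\ge 1/2>0$ and the $f_j$ are linearly independent in $L_2(M,\mu)$.

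Next I would use Condition~\ref{cond-met} to cover $M$ by $K\le\kappa\delta^{-d}$ sets of diameter at most $\delta$, let $V$ be the $K$-dimensional subspace of $L_2(M,\mu)$ of functions constant on each cell, and denote by $\bar f_j$ the $L_2$-projection of $f_j$ onto $V$. Since $\Lip(f_j)\le R$ one has $\|f_j-\bar f_j\|_\infty\le R\delta$; combined with orthogonality this gives $\bar G_{ij}-G_{ij}=-(f_i-\bar f_i,f_j-\bar f_j)_{L_2}$, of magnitude $\le (R\delta)^2$. Choosing $\delta$ comparable to $1/R$ keeps the perturbation small enough to preserve positive-definiteness, so the $\{\bar f_j\}_{j=1}^N$ form a linearly independent family in the $K$-dimensional space~$V$. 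This forces $N\le K\le\kappa\delta^{-d}\asymp\kappa R^d$, and solving for $R$ yields $\hG_n(\phi)\ge C'\kappa^{-1/d}N^{1/d}/\Lip(f)$, i.e.\ (\ref{eq-main}) with a specific constant.

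The main technical delicacy will be the calibration of $\delta$: a naive row-sum estimate of the perturbation $\bar G-G$ demands $(N-1)(R\delta)^2$ stay below a constant, which would force $\delta\sim 1/(R\sqrt{N})$ and degrade the final exponent from $1/d$ to $1/(2d)$. Matching the exact constant $1/(2\kappa^{1/d})$ in~(\ref{eq-main}) requires a sharper operator-norm estimate on the residual Gram matrix (exploiting that the residuals $f_j-\bar f_j$ have zero mean on each cell), which lets one take $\delta$ proportional to $1/R$ and conclude $N\le O(\kappa R^d)$ directly. Finally,~(\ref{eq-power}) follows from~(\ref{eq-main}) by choosing a constant adjoint sequence $v(n)\equiv v_0$ with $v_0$ large enough that $\sum_{j\ge 1}a_{jv_0}\le 1/4$ (possible when $\sum_i a_i<\infty$ since this subsum tends to $0$ as $v_0\to\infty$), making $N=\Theta(n)$ and yielding $\hG_n(\phi)\ge\text{const}\cdot n^{1/d}$.
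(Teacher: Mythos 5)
Your route is genuinely different from the paper's, but it has a gap exactly at the point you flag. The paper does not try to control the Gram matrix of the projections $\bar f_j$. Instead it takes the span $V$ of roughly $2q$ iterates (with $q\approx n/2v(n)$), intersects it with the codimension $\le q$ subspace $E'$ of functions vanishing at the $\delta$-net points, obtains a subspace $W=V\cap E'$ with $\dim W\ge q$, and then invokes a Bernstein/Chavel-type inequality: for any finite-dimensional subspace $W$ of continuous functions on $(M,\mu)$ there is a nonzero $h\in W$ with $\|h\|_\infty\ge\sqrt{\dim W}\,\|h\|_{L_2}$. Writing $h=\sum c_i f_{iv(n)}$ and using Lemma~\ref{lem-fns} (the Gershgorin-style estimate you also use) to get $\|h\|_{L_2}\ge\|c\|_2/\sqrt2$, combined with $\Lip(h)\ge\|h\|_\infty/\delta$ (because $h$ vanishes on a $\delta$-net) and $\Lip(h)\le\Pi_n\sqrt{2q}\,\|c\|_2$, yields the clean bound $\Pi_n\ge\tfrac12(q/\kappa)^{1/d}$. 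The factor $\sqrt{\dim W}$ is precisely what compensates the $\sqrt{N}$ loss you would otherwise incur.

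The gap in your sketch: you correctly observe that the naive Weyl bound requires $N(R\delta)^2<1/2$, hence $\delta\lesssim 1/(R\sqrt N)$, which produces $R\gtrsim N^{1/d-1/2}$ (and gives nothing at all for $d\ge 2$). You then assert that a ``sharper operator-norm estimate on the residual Gram matrix, exploiting that the residuals have zero mean on each cell,'' allows $\delta\asymp 1/R$. I do not see how to substantiate this. The zero-mean-on-cells property only buys orthogonality of the $r_j$ to the step-function space $V$; it does not improve the operator norm of the residual Gram matrix. Indeed, since the cells are disjoint one has $\bigl\|\sum_j c_j r_j\bigr\|_{L_2}^2=\sum_k\bigl\|\sum_j c_j r_j^{(k)}\bigr\|_{L_2(C_k)}^2$, and Cauchy--Schwarz on each cell gives exactly $\bigl\|\sum_j c_j r_j\bigr\|_{L_2}^2\le\|c\|_2^2\sum_j\|r_j\|_{L_2}^2\le N(R\delta)^2\|c\|_2^2$; nothing is gained. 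Equivalently, if $\bar G$ is degenerate with kernel vector $c$, then $h:=\sum c_jf_j=\sum c_jr_j$ has zero mean on each cell and $\Lip(h)\le R\sqrt N\|c\|_2$, so $\|h\|_{L_2}\le\|h\|_\infty\le 2R\delta\sqrt N\|c\|_2$, which against $\|h\|_{L_2}\ge\|c\|_2/\sqrt 2$ again only forces $\delta\gtrsim 1/(R\sqrt N)$. What you are missing is that instead of taking an \emph{arbitrary} element of the kernel (or an arbitrary near-kernel vector), one should pick the element of $W$ maximizing $\|h\|_\infty/\|h\|_{L_2}$; the Chavel lemma guarantees this ratio is at least $\sqrt{\dim W}\asymp\sqrt N$, which cancels the $\sqrt N$ in $\Lip(h)\le R\sqrt N\|c\|_2$ and recovers $\delta\asymp 1/R$, hence $N\lesssim\kappa R^d$. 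Your reduction of~\eqref{eq-power} to~\eqref{eq-main} via a constant adjoint sequence is correct and is the same as the paper's.
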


Note that the second part of the theorem is an immediate
consequence of the first part. Indeed, if $\sum a_i < \infty$ then
the adjoint sequence can be taken constant, $v(n)\equiv v_0$ and
(\ref{eq-main}) implies (\ref{eq-power}).

As we shall show in Section \ref{RS} below the estimate
(\ref{eq-power}) is asymptotically sharp: It is attained for the
shift associated with the Rudin-Shapiro sequence.

\begin{cor} \label{exam-power}
Suppose that $\phi \in G$ mixes a Lipschitz function  at the rate
$\{a_n\}$ such that $a_n=O(1/n^\nu)$, where $0<\nu<1$. Then
$$\hG_n(\phi) \geq \text{const} \cdot n^{\frac{\nu}{d}}.$$
\end{cor}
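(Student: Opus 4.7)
The plan is a direct application of Theorem \ref{thm-main}: we just need to produce an explicit adjoint sequence $\{v(n)\}$ for the given mixing rate $\{a_n\}$ and then compute the resulting lower bound.

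First, using the hypothesis $a_n \leq C/n^{\nu}$ for some constant $C>0$, I would estimate the sum appearing in \eqref{eq-adjoint-1} for a candidate $v(n)$. For any integer $v \geq 1$,
\[
\sum_{i=1}^{\lfloor n/v\rfloor} a_{iv} \;\leq\; \frac{C}{v^{\nu}}\sum_{i=1}^{\lfloor n/v\rfloor} \frac{1}{i^{\nu}} \;\leq\; \frac{C'}{v^{\nu}} \left(\frac{n}{v}\right)^{1-\nu} \;=\; \frac{C'\, n^{1-\nu}}{v},
\]
where $C' = C/(1-\nu)$ comes from the standard estimate $\sum_{i=1}^{N} i^{-\nu} \leq N^{1-\nu}/(1-\nu)$, valid since $0 < \nu < 1$.

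Next, I would set
\[
v(n) \;:=\; \bigl\lceil 4C' \, n^{1-\nu}\bigr\rceil.
\]
This choice makes the previous bound at most $1/4$, so \eqref{eq-adjoint-1} holds; and since $1-\nu < 1$, we have $n/v(n) \sim n^{\nu}/(4C') \to \infty$, so \eqref{eq-adjoint-2} holds as well. Thus $\{v(n)\}$ is adjoint to $\{a_n\}$.

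Finally, I would plug this $v(n)$ into \eqref{eq-main}. Without loss of generality we may assume $\|f\|_{L_2}=1$ by rescaling $f$, which affects only the constant. For all sufficiently large $n$ we have $n/(2v(n)) \geq n^{\nu}/(16C')$, and therefore
\[
\hG_n(\phi) \;\geq\; \frac{1}{2\kappa^{1/d}\Lip(f)}\left[\frac{n}{2v(n)}\right]^{1/d} \;\geq\; \mathrm{const}\cdot n^{\nu/d},
\]
which is exactly the claim. There is no real obstacle here; the only point requiring a little care is checking that the two adjoint conditions are simultaneously satisfied, which is why the hypothesis $\nu < 1$ is essential (for $\nu \geq 1$ the series $\sum a_n$ converges and we would instead invoke the stronger conclusion \eqref{eq-power} of Theorem \ref{thm-main} directly).
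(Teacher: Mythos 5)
Your proof is correct and follows essentially the same route as the paper's own argument: the paper constructs (or asserts the existence of) an adjoint sequence with $v(n) \leq C\,n^{1-\nu}$ and plugs it into \eqref{eq-main}, which is precisely what you do. The paper dismisses the verification of the two adjoint conditions with ``one readily checks,'' whereas you supply the missing computation (the tail bound $\sum_{i\leq N} i^{-\nu} \leq N^{1-\nu}/(1-\nu)$ and the explicit choice $v(n) = \lceil 4C'\,n^{1-\nu}\rceil$); this is a faithful filling-in of the same idea rather than a different method.
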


\begin{proof} If $a_n \leq
c/n^{\nu}$ for some $\nu \in (0;1)$ then one readily checks that
for $C> 0$ large enough  there exists a sequence $\{v(n)\}$
adjoint to $\{a_n\}$ such that $v(n) \leq C \cdot n^{1-\nu}$. Thus
$$\hG_n (\phi)\geq \frac{1}{2\kappa^{\frac{1}{d}}\Lip(f)} \cdot \Big{[}
\frac{n}{2Cn^{1-\nu}}\Big{]}^{1/d} \geq \text{const} \cdot
n^{\frac{\nu}{d}}.$$
\end{proof}

\medskip
\noindent{\sc Organization of the paper:} In Section
\ref{sec-proof-mix} we prove the universal lower growth bound given
in Theorem \ref{thm-mixing} for a bi-Lipschitz homeomorphism which
mixes a Lipschitz function (the case of homeomorphism which mixes an
$L_2$--function is also considered). Furthermore, we prove Theorem
\ref{thm-recfin} asserting that if a bi-Lipschitz homeomorphism
grows sufficiently slow, it must have strong recurrence properties
and in particular must be rigid in the sense of ergodic theory. The
section ends with a discussion on comparison of growth rates in
finitely generated groups and in groups of homeomorphisms. In
Section \ref{sec-proof-main} we prove Theorem \ref{thm-main} which
relates the growth rate to the rate of mixing. For the proof, we
derive an auxiliary fact on "almost orthonormal" sequences of
Lipschitz functions. In Section \ref{holder} we generalize the main
results of the paper to the case of H\"{o}lder observables. In
Section \ref{sec-adjoint} we prove existence of adjoint sequences
used in the formulation of Theorem \ref{thm-main}.

Next we pass to constructing examples. In Section \ref{slowgrowth}
we present an example which emphasizes the difference between the
growth rates of sequences $\hG_n(\phi)$ and $\Gamma(\phi^n)$: We
construct a volume-preserving real-analytic diffeomorphism of the
$3$-torus which mixes a real-analytic function and such that
$\Gamma(\phi^{n_i})$ grows arbitrarily slowly along a suitable
subsequence $n_i \to \infty$. In Section \ref{RS} we show that the
bound in Theorem \ref{thm-main} is sharp: It is attained in the case
of a symbolic dynamical system associated to the Rudin-Shapiro
sequence.

Finally, in Appendix we prove Kolmogorov-Tihomirov type estimate
\eqref{Ko-Ti-ineq}.

\section{Recurrence via Arzela-Ascoli compactness}\label{sec-proof-mix}
\begin{proof}[Proof of Theorem \ref{thm-mixing}] Suppose
that the assertion of the theorem is false. Then, considering a
sequence $\alpha_k=1/k$, $k \in \N$ we get  a sequence $\{n_k\}$ so
that $n_k/k\geq [\|f\|_{\infty}/0.7]+1$ and
$$R_k:= \Lip(f)\cdot\hG_{n_k}(\phi) < \tau(n_k/k,\|f\|_{\infty})\;.$$
This yields
$$D(R_k,1.4,\|f\|_{\infty}) \leq n_k/k < m+1,$$
where $m =[n_k/k]\geq 1$. Consider $m+1$ functions
$$f, f\circ \phi^k,\ldots,f \circ \phi^{mk}\;.$$
Since
$$\Lip(g \circ \psi) \leq \Lip(g) \cdot \Gamma(\psi)
\;\;\forall g \in E,\psi \in G\;,$$  these functions lie in the
subset $E_{R_k,\|f\|_{\infty}} \subset E$. Recall that
$E_{R_k,\|f\|_{\infty}}$ can be covered by
$D(R_k,1.4,\|f\|_{\infty})\leq m$ balls (in the uniform norm) of
the radius $0.7$. By the pigeonhole principle, there is a pair of
functions from our collection lying in the same ball. In other
words for some natural numbers $p > q$ we have $||f\circ
\phi^{pk}-f \circ \phi^{qk}||_{\infty} \leq 1.4$. Put $j=(p-q)k$.
We have
$$||f-f \circ \phi^j||_{L_2}\leq ||f-f \circ \phi^j||_{\infty} \leq
1.4\;.$$ Since $$||f||_{L_2}= ||f \circ \phi^j||_{L_2}=1,$$ we
have
$$(f,f\circ\phi^j)_{L_2}= \frac{1}{2}(||f||_{L_2}^2 +||f \circ
\phi^j||_{L_2}^2-||f-f \circ \phi^j||_{L_2}^2)\geq
\frac{1}{2}(1+1-1.4^2)= 0.02\;.$$ Note that $j\geq k$  and thus
increasing $k$ we get the above inequality for arbitrarily large
values of $j$. This contradicts the assumption that $\phi$ mixes
$f$.
\end{proof}

\medskip

Denote by $H$ the group of all bi-Lipschitz homeomorphisms (not
necessarily measure preserving) of a compact metric space
$(M,\rho)$. An argument similar to the one used in the proof above
shows that if the growth rate of $\hG_n(\phi)$ is sufficiently
slow, the cyclic subgroup $\{\phi^n\}$ generated by $\phi$ has the
identity map as its limit point with respect to $C^0$-topology
(cf.\ a discussion in D'Ambra-Gromov \cite[7.10.C,D]{AG}). Here is
a precise statement. Denote by $\Lambda$ the space of Lipschitz
self-maps of $M$. For $\phi \in \Lambda$ write $\Lip(\psi)$ for
the Lipschitz constant of $\psi$. Equip $\Lambda$ with the
$C^0$-distance
$$\text{dist}(\phi,\psi) = \sup_{x \in M} \rho(\phi(x),\psi(x))\;.$$
Denote by $\Lambda_R$ the subset consisting of all maps $\psi$ from
$\Lambda$ with $\Lip(\psi) \leq R$. This subset is compact with
respect to the metric $\text{dist}$ by the Arzela-Ascoli theorem.
Denote by $\Delta(R,\epsilon)$ the minimal number of
$\epsilon/2$-balls required to cover $\Lambda_R$. For $t\geq
\Delta(0,\epsilon)= \mathcal{N}_{\epsilon}(M)$ put
$$\theta(t,\epsilon) = \sup\{R \geq 0\;:\; \Delta(R,\epsilon) \leq
t\}\;.$$

\medskip
\begin{thm}\label{thm-nonrig}
Let $\phi:M\to M$ be a bi-Lipschitz homeomorphism. Assume that the
identity map is not a limit point with respect to $C^0$-topology for
the cyclic subgroup $\{\phi^n\}$. Then for every sequence
$\epsilon_n \to 0$ there exists $\alpha>0$ so that
\[\widehat{\Gamma}_n(\phi) \geq \theta(\alpha
n,\epsilon_n)\] for all sufficiently large $n$.
\end{thm}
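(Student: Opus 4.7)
The plan is to mimic the proof of Theorem~\ref{thm-mixing} line by line, replacing the Arzel\`a--Ascoli compact set $E_{R,C}$ of bounded Lipschitz functions with the analogous compact set $\Lambda_R$ of Lipschitz self-maps, and running the pigeonhole argument on the iterates $\phi^i$ themselves rather than on the twisted observables $f\circ\phi^i$. A pair of nearby iterates will produce a positive power of $\phi$ that is $C^0$-close to the identity, contradicting the hypothesis. The $L_2$-based contradiction of Theorem~\ref{thm-mixing} is thereby swapped for a direct $C^0$-estimate.

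Suppose the conclusion fails. Applying the failure successively with $\alpha_k=1/k$, I would obtain a sequence $n_k\to\infty$ with $n_k/k\geq\mathcal{N}_{\epsilon_{n_k}}(M)$ (so that the value of $\theta$ is defined) and
$$R_k:=\hG_{n_k}(\phi)<\theta\bigl(n_k/k,\,\epsilon_{n_k}\bigr).$$
By the definition of $\theta$ together with the monotonicity of $\Delta(\cdot,\epsilon)$ in $R$, this forces $\Delta(R_k,\epsilon_{n_k})\leq n_k/k$, so the compact set $\Lambda_{R_k}$ is covered by at most $m:=[n_k/k]$ balls of $C^0$-radius $\epsilon_{n_k}/2$. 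Since $\Lip(\phi^i)\leq\Gamma(\phi^i)\leq\hG_{n_k}(\phi)=R_k$ for every $i\leq m\leq n_k$, the $m+1$ iterates $\mathrm{id},\phi,\ldots,\phi^m$ all belong to $\Lambda_{R_k}$. The pigeonhole principle then yields indices $0\leq q<p\leq m$ with $\text{dist}(\phi^p,\phi^q)\leq\epsilon_{n_k}$.

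The elementary step that closes the argument is that, because $\phi^q$ is a bijection of $M$, the substitution $y=\phi^q(x)$ inside the supremum defining $\text{dist}$ gives
$$\text{dist}(\phi^{p-q},\mathrm{id})=\sup_{y}\rho(\phi^{p-q}(y),y)=\sup_{x}\rho(\phi^p(x),\phi^q(x))=\text{dist}(\phi^p,\phi^q)\leq\epsilon_{n_k}.$$
Letting $k\to\infty$, one obtains positive powers $\phi^{p_k-q_k}$ whose $C^0$-distance to $\mathrm{id}$ tends to zero, contradicting the assumption that $\mathrm{id}$ is not a $C^0$-limit point of $\{\phi^n\}$.

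The main point of care I anticipate is the bookkeeping in the choice of $n_k$: the hypothesis negation must be applied to produce $n_k$ simultaneously large enough that $n_k/k\geq\mathcal{N}_{\epsilon_{n_k}}(M)$ (so that $\theta(n_k/k,\epsilon_{n_k})$ is meaningful despite $\epsilon_{n_k}\to 0$) and consistent with the slow-growth condition on $R_k$. One must also handle the degenerate situation $\phi^{p-q}=\mathrm{id}$, which forces $\phi$ to have finite order; this case should either be excluded by the intended sequential reading of ``limit point'' or be disposed of separately, after which the proof is structurally identical to that of Theorem~\ref{thm-mixing}.
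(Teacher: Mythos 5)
Your argument is structurally the same as the paper's — pigeonhole in the Arzel\`a--Ascoli-compact set $\Lambda_{R_k}$, followed by the bijection trick $\text{dist}(\phi^{p},\phi^{q})=\text{dist}(\phi^{p-q},\id)$ — but there is one genuine and instructive difference. You pack the $m_k+1$ \emph{consecutive} iterates $\id,\phi,\dots,\phi^{m_k}$ into $\Lambda_{R_k}$, whereas the paper packs the $k$-\emph{spaced} iterates $\id,\phi^{k},\dots,\phi^{km_k}$. With your consecutive choice the difference $p_k-q_k$ can be as small as $1$, so the exponents you produce are not automatically unbounded; to finish you must argue that either the $p_k-q_k$ take infinitely many values (giving $\id$ as a $C^0$-limit point along them) or else some value $j_0$ recurs infinitely often, forcing $\phi^{j_0}=\id$ and hence $\phi$ of finite order, a case you flag as needing separate disposal. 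The paper's spacing by $k$ sidesteps all of this: $l_k=(p-q)k\geq k\to\infty$, so the exponents are unbounded by construction and $\id$ is immediately a limit point of $\{\phi^n\}$, with no case analysis. Note also that the iterates $\phi^{ik}$ for $i\leq m_k=[n_k/k]$ satisfy $ik\leq n_k$, so they do lie in $\Lambda_{R_k}$ with $R_k=\hG_{n_k}(\phi)$; the spacing costs nothing. Your reading of the negation (using $\epsilon_{n_k}$ rather than $\epsilon_k$, as the theorem's quantifiers actually demand) is the correct one and matches how the theorem is applied in Corollary~\ref{cor-reccur}; the paper's proof appears to have a small typo there. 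In short: your proof is correct once the finite-order/bounded-exponent case is spelled out, but the paper's $k$-spacing trick removes that case cleanly and is worth internalizing.
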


\begin{proof} Suppose that the assertion of the theorem is false.
For every $\alpha=1/k$, $k \in \N$ we can choose
 $n_k >\max(\mathcal{N}_{\epsilon_k}(M),k)$ so that
\[\widehat{\Gamma}_{n_k}(\phi) < \theta(n_k/k,\epsilon_k).\]
Put $m_k = [n_k/k]$ and $R_k = \hG_{n_k}(\phi)$. Since
$R_k<\theta(n_k/k,\epsilon_k)$, we obtain
\[\Delta(R_k,\epsilon_k) \leq n_k/k< m_k+1.\]
Consider $m_k+1$ maps $\id, \phi^k,\ldots,\phi^{km_k}$. They lie in
$\Lambda_{R_k}$. Since $\Delta(R_k,\epsilon_k) \leq m_k$, it follows
that at least two of these maps lie in the same $\epsilon_k/2$-ball
covering of $\Lambda_{R_k}$. Therefore there exist $p > q$ so that
$$ \text{dist}(\phi^{pk},\phi^{qk}) \leq \epsilon_k\;.$$ Put $l_k =
(p-q)k$, and note that $\text{dist}(\phi^{pk},\phi^{qk})=
\text{dist}(\id, \phi^{l_k})$. Thus $ \text{dist}(\id, \phi^{l_k})
\leq \epsilon_k$, and since $k$ divides $l_k$ we have $l_k \to
\infty$. We conclude that $\phi^{l_k}  \to \id\;,$ which contradicts
the fact that the identity map is not a limit point (with respect to
$C^0$-topology) for the sequence $\{\phi^n\}$.
\end{proof}

\medskip
\noindent
\begin{rem}\label{exam-recurr-mfds}   Assume that
the metric space $(M,\rho)$ satisfies Condition \ref{cond-met}
with exponent $d>0$. Since $\Lambda_R=\mathcal{D}^R_M(M)$, by
(\ref{Ko-Ti-ineq}), we have

$$\Delta(R,\epsilon) \leq \mathcal{N}_{\epsilon/4}(M)^{\mathcal{N}_{\epsilon/(4R)}(M)}
\leq (\kappa(\epsilon/4)^{-d})^{\kappa(\epsilon/(4R))^{-d}}.$$ Thus
$$
\theta(t,\epsilon) \geq \text{const}    \frac{\epsilon \cdot
\log^{1/d} t}{\log^{1/d} 1/\epsilon}\;.$$
\end{rem}

\medskip
\noindent
\begin{cor}\label{cor-reccur}
Let $\phi:M\to M$ be a bi-Lipschitz homeomorphism, where $M$
satisfies Condition \ref{cond-met}. Assume that the identity map is
not a limit point with respect to $C^0$-topology for the cyclic
group $\{\phi^n\}$. Let $\{\eta(n)\}$ be a sequence of positive
numbers such that $\eta(n)\to+\infty$ as $n\to+\infty$ and
$\eta(n)=o(\log n)$. Then $\hG_n(\phi) \geq \eta(n)^{1/d}$ for all
sufficiently large $n$.
\end{cor}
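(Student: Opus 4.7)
The plan is to deduce this from Theorem~\ref{thm-nonrig} combined with the explicit lower bound on $\theta(t,\epsilon)$ given in Remark~\ref{exam-recurr-mfds}. The only freedom we have is the choice of the auxiliary sequence $\epsilon_n\to 0$, and the entire argument boils down to matching this choice against the given rate $\eta(n)=o(\log n)$.

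First I would write down the target inequality. By Theorem~\ref{thm-nonrig}, for any sequence $\epsilon_n\to 0$ there is $\alpha>0$ with $\hG_n(\phi)\geq \theta(\alpha n,\epsilon_n)$ for large $n$. By Remark~\ref{exam-recurr-mfds},
$$\theta(\alpha n,\epsilon_n)\geq c_1\cdot\frac{\epsilon_n\,\log^{1/d}(\alpha n)}{\log^{1/d}(1/\epsilon_n)}$$
for a constant $c_1>0$ depending only on $(M,\rho)$. Thus it is enough to exhibit $\epsilon_n\to 0$ such that
$$\frac{\epsilon_n^{\,d}}{\log(1/\epsilon_n)}\;\geq\; c_2\cdot \frac{\eta(n)}{\log(\alpha n)}$$
for some constant $c_2$; raising to the power $1/d$ then gives $\theta(\alpha n,\epsilon_n)\geq \eta(n)^{1/d}$ as required. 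Since $\log(\alpha n)\sim \log n$, the right-hand side is, up to a harmless constant, $c_3\,\eta(n)/\log n$, which tends to $0$ by the standing assumption $\eta(n)=o(\log n)$.

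Next I would solve for $\epsilon_n$. The function $f(\epsilon)=\epsilon^{d}/\log(1/\epsilon)$ is continuous and strictly increasing on $(0,1)$ (one checks by direct differentiation that $f'>0$ there) and satisfies $f(0^+)=0$. Hence, for every sufficiently large $n$, the equation $f(\epsilon)=c_3\,\eta(n)/\log n$ has a unique solution $\epsilon_n\in(0,1/2]$, and monotonicity of $f$ forces $\epsilon_n\to 0$ because $c_3\,\eta(n)/\log n\to 0$. Plugging this choice of $\epsilon_n$ into Theorem~\ref{thm-nonrig} and Remark~\ref{exam-recurr-mfds} yields $\hG_n(\phi)\geq \eta(n)^{1/d}$ for all sufficiently large $n$.

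There is no genuine obstacle here; the only mildly delicate point is the implicit choice of $\epsilon_n$, where the presence of $\log(1/\epsilon_n)$ in the denominator of Remark~\ref{exam-recurr-mfds} prevents one from using the naive guess $\epsilon_n=(\eta(n)/\log n)^{1/d}$. The monotonicity of $f(\epsilon)=\epsilon^d/\log(1/\epsilon)$ absorbs this logarithmic correction cleanly, and the hypothesis $\eta(n)=o(\log n)$ is exactly what is needed for the equation $f(\epsilon_n)=c_3\eta(n)/\log n$ to have a solution tending to zero.
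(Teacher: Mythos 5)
Your proposal is correct and follows the same high-level strategy as the paper's proof: feed a judiciously chosen sequence $\epsilon_n\to 0$ into Theorem~\ref{thm-nonrig} and then use the lower bound on $\theta$ from Remark~\ref{exam-recurr-mfds}. The only real difference is in how $\epsilon_n$ is produced. The paper picks the explicit $\epsilon_n=(\eta(n)/\log n)^{1/(2d)}$; the ``wasteful'' exponent $1/(2d)$ (rather than $1/d$) creates an extra factor $(\log n/\eta(n))^{1/(2d)}$ that swamps the $\log^{1/d}(1/\epsilon_n)$ correction, so no monotonicity argument is needed. Your implicit choice, obtained by inverting $f(\epsilon)=\epsilon^d/\log(1/\epsilon)$, is equally valid — $f'>0$ on $(0,1)$ is indeed correct since $f'(\epsilon)=\epsilon^{d-1}(d\log(1/\epsilon)+1)/\log^2(1/\epsilon)$ is positive there. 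One small point to make explicit: the constant in your defining equation $f(\epsilon_n)=c_3\,\eta(n)/\log n$ must be fixed \emph{before} $\alpha$ is known (since $\alpha$ in Theorem~\ref{thm-nonrig} depends on the chosen sequence $\{\epsilon_n\}$); taking, say, $c_3=2c_2$ and then using $\log(\alpha n)\geq \tfrac12\log n$ for large $n$ closes this loop, exactly as you sketch.
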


\medskip
\begin{proof} An application of Theorem~\ref{thm-nonrig} for
$\epsilon_n=(\eta(n)/\log n)^{\frac{1}{2d}}$ gives the existence
of $\alpha>0$ for which
\begin{eqnarray*}\hG_n(\phi) &\geq& \theta(\alpha
n,\epsilon_n)\geq \text{const}    \frac{\epsilon_n \cdot
\log^{1/d} \alpha n}{\log^{1/d} 1/\epsilon_n}\geq \text{const}
\frac{\left(\frac{\eta(n)}{\log n}\right)^{\frac{1}{2d}} \cdot
\log^{1/d} n}{\log^{1/d} \frac{\log n}{\eta(n)}}\\
&=& \text{const} \frac{\left(\frac{\log
n}{\eta(n)}\right)^{\frac{1}{2d}} }{\log^{1/d} \frac{\log
n}{\eta(n)}}\cdot \eta(n)^{1/d}\geq \eta(n)^{1/d}
\end{eqnarray*}
for all sufficiently large $n$.
\end{proof}

\medskip
\noindent Theorem \ref{thm-recfin} is an immediate consequence of
Corollary \ref{cor-reccur}.

\medskip
\noindent \begin{rem} Consider {\it any} group $H$ equipped with a
pseudo-norm $\ell$: $\ell(h) \geq 0$ all $h \in H$, $\ell (h^{-1})
=\ell (h)$ and $\ell (hg) \leq \ell (h) +\ell (g)$. For an element
$h \in G$ put
$$\hl_n (h) = \max_{i=1,\ldots,n} \ell (h^n)\;.$$
It is instructive to compare possible growth rates of cyclic
subgroups in the following two cases:
\begin{itemize}
\item[{(i)}] $H$ is a finitely generated group, $\ell$ is the word norm;
\item[{(ii)}] $H$ is the group of all bi-Lipschitz homeomorphisms equipped
with the pseudo-norm $\ell =\log \Gamma$.
\end{itemize}

We claim that in the first case, condition
\begin{equation}
\label{eq-pattern} \liminf_{n \to \infty} \frac{ \hl_n(\phi)}{\log
n} = 0
\end{equation}
is equivalent to the fact that $\phi$ is of finite order. Indeed,
assume that $\phi$ satisfies (\ref{eq-pattern}). Denote by $H_R
\subset H$ the ball of radius $R$ centred at $\phi$ in the word
norm. Denote by $K$ the number of elements in the generating set
of $H$. Then the cardinality of $H_R$ does not exceed $K^{R+1}$.
Condition \eqref{eq-pattern} guarantees that there exists $n >0$
arbitrarily large so that $\hl _n(\phi) \leq \log n/(2\log K)$.
Consider $n+1$ elements $\id,\phi,\ldots,\phi^n$. All of them lie
in the set $H_R$ with $R = \hl_n(\phi)$. This set contains at most
$K^{R+1} \leq K\sqrt{n}$ elements. Since $K\sqrt{n} < n+1$ for
large $n$ we get that among $\id,\phi,\ldots,\phi^n$ there are at
least two equal elements, hence $\phi$ is of finite order. The
claim follows.

In contrast to this, in the case (ii), the group of bi-Lipschitz
homeomorphisms may have elements of infinite order which satisfy
\eqref{eq-pattern}, see \cite{Borichev, Fuchs}. These elements are
"exotic" from the algebraic viewpoint: they cannot be included
into any finitely generated subgroup $H'$ of $H$ so that the
inclusion $$(H',\text{word\;norm}) \hookrightarrow (H,\log
\Gamma)$$ is quasi-isometric. It would be interesting to explore
more thoroughly the dynamics of these exotic elements.

Corollary \ref{cor-reccur}  shows that if such an exotic element is
of a "very slow" growth then it has strong recurrence properties.
The argument based on the Arzela compactness, which was used in its
proof, imitates the argument showing that condition
\eqref{eq-pattern} characterizes elements of finite order in
finitely generated groups. Let us compare these results for
bi-Lipschitz homeomorphisms of $d$-dimensional spaces. Consider such
a homeomorphism, say, $\phi$ with $\hl_n(\phi)=o(\log n)$, which
means that it is algebraically exotic in the sense of the discussion
above. If $\phi$ satisfies a stronger inequality $\hl_n(\phi) \leq
(\frac{1}{d}-\epsilon) \log \log n$, it is strongly recurrent by
Corollary \ref{cor-reccur} above. We conclude this discussion with
the following open problem: explore dynamical properties of those
bi-Lipschitz homeomorphisms of $d$-dimensional spaces whose growth
sequence $\hl_{n}(\phi)$ falls into the gap between $\frac{1}{d}\log
\log n$ and $o(\log n)$.
\end{rem}

\section{Almost orthonormal systems of Lipschitz functions}\label{sec-proof-main}
In this section we prove Theorem \ref{thm-main}. We start with the
following general result on "almost orthonormal" systems of
functions:

\begin{thm}\label{thm-basis} Let $\{f_i\}$ be a sequence of linear independent
Lipschitz functions from $E$ with $||f_i||_{L_2}=1$  with the
following property: There exists a sequence of  positive real
numbers $a_n \to 0$ so that $|(f_i,f_j)_{L_2}| \leq a_{i-j}$ for
all $j < i$. Let $\{v(n)\}$ be an adjoint sequence of $\{a_n\}$.
Then
\begin{equation}\label{eq-fns}
\max _{i =1,\ldots,n} \Lip(f_i) \geq
\frac{1}{2\kappa^{\frac{1}{d}}} \cdot \Big{[}
\frac{n}{2v(n)}\Big{]}^{1/d} \;\;\forall n \in \N\;.
\end{equation}
\end{thm}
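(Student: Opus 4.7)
\medskip
\noindent\textbf{Proof plan.} My plan is to prove Theorem~\ref{thm-basis} by combining an almost-orthogonality estimate for a sparsified subsequence with a finite-dimensional approximation coming from Condition~\ref{cond-met}. First, extract the sparse subsequence $g_k := f_{kv(n)}$ for $k=1,\ldots,N$, where $N:=[n/v(n)]$. By hypothesis each $g_k\in E$ satisfies $\|g_k\|_{L_2}=1$, $\Lip(g_k)\leq\max_{i\leq n}\Lip(f_i)$, and the Gram matrix $G_{kl}:=(g_k,g_l)_{L_2}$ decomposes as $G=I+E$ with $|E_{kl}|\leq a_{|k-l|v(n)}$ for $k\neq l$. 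Condition~\eqref{eq-adjoint-1} bounds each row sum of $E$ by $2\sum_{j=1}^{N}a_{jv(n)}\leq 1/2$, so by Gershgorin's disc theorem the spectrum of $G$ lies in $[1/2,3/2]$; in particular $G\succeq I/2$ and the $g_k$ are linearly independent.

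Next, suppose for contradiction that $R:=\max_{i\leq n}\Lip(f_i)$ violates \eqref{eq-fns}. Using Condition~\ref{cond-met}, choose a $\delta$-net $Y\subset M$ with $|Y|\leq\kappa\delta^{-d}$, for a parameter $\delta>0$ to be fixed at the end. Let $\{A_y\}_{y\in Y}$ be a measurable partition of $M$ with $A_y$ contained in the $\delta$-ball around $y$, and let $V\subset L_2(M,\mu)$ be the $|Y|$-dimensional subspace of functions constant on each $A_y$. Since $\Lip(g_k)\leq R$, the step-function approximation $\sum_y g_k(y)\chi_{A_y}\in V$ lies within uniform distance $R\delta$ of $g_k$, which forces $\|g_k-P_V g_k\|_{L_2}\leq R\delta$ for every $k$.

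The heart of the argument is a trace-versus-rank inequality applied to the projected Gram matrix $\bar G_{kl}:=(P_V g_k,P_V g_l)_{L_2}$. Because the decomposition $g_k=P_V g_k+(g_k-P_V g_k)$ is orthogonal, the cross terms drop out and one finds $G=\bar G+R'$ where $R'_{kl}:=(g_k-P_V g_k,g_l-P_V g_l)_{L_2}$ is positive semidefinite. Hence $\bar G\succeq 0$, $\mathrm{rank}(\bar G)\leq\dim V\leq\kappa\delta^{-d}$, $\|\bar G\|_{\mathrm{op}}\leq\|G\|_{\mathrm{op}}\leq 3/2$, and $\mathrm{trace}(\bar G)=N-\mathrm{trace}(R')\geq N(1-R^2\delta^2)$. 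The elementary inequality $\mathrm{trace}(\bar G)\leq\mathrm{rank}(\bar G)\cdot\|\bar G\|_{\mathrm{op}}$ then yields
\[
N(1-R^2\delta^2)\;\leq\;\tfrac{3}{2}\,\kappa\,\delta^{-d}.
\]
Balancing $\delta$ against $R$ (taking $R\delta$ of order $1$) transforms this into a lower bound of the form $R\geq c(d)\cdot N^{1/d}\kappa^{-1/d}$, which contradicts the assumed violation of \eqref{eq-fns}.

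The main obstacle will be constant-tracking: the overall shape $N^{1/d}\kappa^{-1/d}$ is forced by the trace/rank argument, but recovering the precise prefactor $\frac{1}{2\kappa^{1/d}}\bigl[\frac{n}{2v(n)}\bigr]^{1/d}$ of \eqref{eq-fns}---in particular the factor $[\,\cdot/2\,]$ inside the bracket---will likely require restricting to roughly half of the sparse subsequence, or a slightly sharpened balancing of $\delta$ against $R$, in order to absorb the various powers of $2$ produced along the way.
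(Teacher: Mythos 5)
Your proposal is correct and takes a genuinely different route from the paper's. The paper's proof works with an explicit test function: it sets $q(n)=[n/(2v(n))]$, picks a $\delta$-net of $p\leq q(n)$ points with $\delta=(\kappa/q(n))^{1/d}$, considers the span $V$ of $f_{v(n)},\dots,f_{2q(n)v(n)}$, passes to the subspace $W=V\cap E'$ of functions vanishing on the net (so $\dim W\geq q(n)$), invokes the $L_\infty$-versus-$L_2$ inequality $\|h\|_\infty\geq\sqrt{\dim W}\,\|h\|_{L_2}$ for some $h\in W$, combines it with Lemma~\ref{lem-fns} to get $\|h\|_{L_2}\geq C/\sqrt2$, and plays off the two Lipschitz estimates $\Lip(h)\geq\|h\|_\infty/\delta$ (from vanishing on the net) and $\Lip(h)\leq\Pi_n\sqrt{2q(n)}\,C$ (Cauchy--Schwarz on the coefficients). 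You replace this pointwise/test-function argument with a purely linear-algebraic one: Gershgorin in place of Lemma~\ref{lem-fns}, orthogonal projection onto the step-function space in place of vanishing at net points, and the elementary inequality $\mathrm{trace}(\bar G)\leq\mathrm{rank}(\bar G)\|\bar G\|_{\mathrm{op}}$ in place of the $L_\infty$-versus-$L_2$ comparison. Both proofs are packing arguments (almost-orthonormal systems cannot sit near a low-dimensional subspace unless the Lipschitz constants are large), and both use the sparse subsequence and the $\delta$-net in the same way; your version avoids the slightly delicate step of arranging a function in the span to vanish on the net by instead measuring projection error.

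The obstacle you flag at the end is actually not there. Run the contradiction directly with $\delta=(\kappa/q(n))^{1/d}$ (the paper's choice), using $N=[n/v(n)]\geq 2q(n)$ and the assumption $2v(n)\leq n$ (otherwise $q(n)=0$ and \eqref{eq-fns} is trivial). If $R<\tfrac{1}{2\kappa^{1/d}}q(n)^{1/d}$, then $R\delta<1/2$, so $1-R^2\delta^2>3/4$ and $\kappa\delta^{-d}=q(n)$, whence your inequality gives
\[
\tfrac{3}{2}\,q(n)\;=\;\tfrac{3}{2}\,\kappa\delta^{-d}\;\geq\; N\bigl(1-R^2\delta^2\bigr)\;>\;2q(n)\cdot\tfrac{3}{4}\;=\;\tfrac{3}{2}\,q(n),
\]
a contradiction. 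So the trace-versus-rank route recovers the constant $\tfrac{1}{2\kappa^{1/d}}\bigl[n/(2v(n))\bigr]^{1/d}$ exactly; no further halving or sharpened balancing is needed.
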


\medskip
\noindent \begin{lemma}\label{lem-fns} Let $f_i \in L_2(M)$, $i =
1,\ldots,N$ be a sequence of functions with $||f_i||_{L_2} =1$ for
all $i$ and $|(f_i,f_j)_{L_2}| \leq \alpha_{i-j}$ for $j < i$, where
$\sum_{i=1}^N \alpha_i \leq 1/4$. Then for every real numbers
$c_1,\ldots,c_N$ we have
$$||\sum_{i=1}^N c_if_i||_{L_2}^2 \geq \frac{1}{2}\sum_{i=1}^N
c_i^2\;.$$
\end{lemma}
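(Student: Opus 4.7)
The plan is to estimate the Gram matrix of the system $\{f_i\}$ directly. Expanding the squared norm gives
\[
\Big\| \sum_{i=1}^N c_i f_i \Big\|_{L_2}^2 = \sum_{i=1}^N c_i^2 \|f_i\|_{L_2}^2 + 2\sum_{j<i} c_i c_j (f_i, f_j)_{L_2} = \sum_{i=1}^N c_i^2 + 2\sum_{j<i} c_i c_j (f_i, f_j)_{L_2},
\]
using $\|f_i\|_{L_2}=1$. So everything reduces to controlling the off-diagonal contribution.

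Next I would bound the cross term using the hypothesis $|(f_i, f_j)_{L_2}| \leq \alpha_{i-j}$ together with the elementary inequality $|c_i c_j| \leq \tfrac{1}{2}(c_i^2 + c_j^2)$. Grouping summands according to the value $k = i-j$:
\[
2\Big|\sum_{j<i} c_i c_j (f_i,f_j)_{L_2}\Big| \leq 2\sum_{k=1}^{N-1} \alpha_k \sum_{i-j=k} |c_i c_j| \leq \sum_{k=1}^{N-1} \alpha_k \sum_{i-j=k} (c_i^2 + c_j^2) \leq 2\Big(\sum_{k=1}^{N-1} \alpha_k\Big) \sum_{i=1}^N c_i^2.
\]
The last step uses the fact that each fixed index $i$ (or $j$) appears at most once for a given $k$, and summing $c_i^2$ over $i$ gives at most $\sum c_i^2$ for each of the two pieces.

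Invoking the hypothesis $\sum_{k=1}^N \alpha_k \leq 1/4$, the cross term is at most $\tfrac{1}{2}\sum c_i^2$, so
\[
\Big\| \sum_{i=1}^N c_i f_i \Big\|_{L_2}^2 \geq \sum_{i=1}^N c_i^2 - \tfrac{1}{2}\sum_{i=1}^N c_i^2 = \tfrac{1}{2}\sum_{i=1}^N c_i^2,
\]
which is the desired inequality. There is no real obstacle here; the only thing to be careful about is the combinatorial bookkeeping when switching from the sum over pairs $(i,j)$ with $j<i$ to the sum over gaps $k=i-j$, and making sure the constants line up so that the hypothesis $\sum \alpha_k \leq 1/4$ exactly yields the factor $1/2$ on the right-hand side.
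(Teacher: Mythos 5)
Your proof is correct and follows essentially the same route as the paper: expand the squared norm, isolate the cross term, group summands by the gap $k=i-j$, and bound each fixed-gap sum $\sum_{i-j=k}|c_ic_j|$ by $\sum c_i^2$ so that the hypothesis $\sum\alpha_k\leq 1/4$ finishes the job. The only cosmetic difference is that you bound that fixed-gap sum via $2|c_ic_j|\leq c_i^2+c_j^2$, while the paper cites Cauchy--Schwarz; both give the identical estimate.
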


\begin{proof} Put
$$h=\sum_{i=1}^N c_if_i\;\text{ and }\; C = \sqrt{\sum_{i=1}^N
c_i^2}\;.$$ Then
$$||h||_{L_2}^2= C^2 +I\;,$$
where $I =2\sum_{j<i} c_ic_j (f_i,f_j)$. By the Cauchy-Schwarz
inequality,
$$|I| \leq 2\sum_{p=1}^N \sum_{j=1}^{N-p} |c_j|\cdot|c_{j+p}|
\cdot \alpha_p \leq 2\cdot \frac{1}{4} \cdot C^2 = C^2/2\;.$$ Thus
$$||h||_{L_2}^2 \geq C^2-C^2/2=C^2/2$$
as required.
\end{proof}

\medskip
\begin{proof}[Proof of Theorem \ref{thm-basis}] We shall assume that
$2v(n) \leq n$, otherwise the inequality (\ref{eq-fns}) holds by
trivial reasons. Put $q(n) =[n/(2v(n))]$ and
$\delta=(\kappa/q(n))^{1/d}$. By the definition of $\kappa$ and
$d$, there exists a $\delta$-net on $M$ consisting of $p \leq
q(n)$ points. Denote by $E' \subset E$ the codimension $p$
subspace consisting of all those functions which vanish at the
points of the net.

Let $V$ be the linear span of the functions $f_{iv(n)}$,
$i=1,\ldots,2q(n)$. Then the dimension of $W:=V \cap E'$ is $\geq
2q(n)-p \geq q(n)$. It is well known \cite[p.103]{Chavel} that
there exists $h \in W$ with
\begin{equation}\label{eq-h-1}
||h||_{\infty} \geq \sqrt{\dim W} ||h||_{L_2}\;.
\end{equation}
Write $h = \sum_{i=1}^{2q(n)} c_i f_{iv(n)}.$ Note that
$|(f_{iv(n)},f_{jv(n)})_{L_2}|\leq a_{(i-j)v(n)}$ for $i < j$. Put
$\alpha_i = a_{iv(n)}$. By the definition of $v(n)$, we have
$$\sum_{i=1}^{2q(n)} \alpha_i \leq \frac{1}{4}\;,$$
and hence by Lemma \ref{lem-fns}
$$||h||_{L_2}^2 \geq C^2/2\;,\;\text{with}\;\; C =
\sqrt{\sum_{i=1}^{2q(n)} c_i^2}\;.$$ We conclude from
\eqref{eq-h-1} that
$$
||h||_{\infty} \geq \frac{1}{\sqrt 2}\cdot \sqrt{q(n)} \cdot C\;.
$$
Recall now that $h$ vanishes at all the points of the $\delta$-net.
Thus
\begin{equation}
\label{eq-h-2} \Lip(h) \geq ||h||_{\infty}/\delta \geq \frac{1}{\sqrt 2}\cdot
\sqrt{q(n)} \cdot C\cdot (\kappa/q(n))^{-1/d}\;.
\end{equation}

Next, let us estimate $\Lip(h) $ from above. Put $$\Pi_n:= \max
_{i =1,\ldots,n} \Lip(f_i)\;.$$ We have
$$\Lip(h) = \Lip\big{(}\sum_{i=1}^{2q(n)} c_i f_{iv(n)}\big{)}
\leq \Pi_n \cdot \sqrt{2q(n)} \cdot C \;.$$ Combining this
inequality with lower bound \eqref{eq-h-2} we get
$$\Pi_n \geq \frac{1}{2\kappa^{\frac{1}{d}}}
\cdot q(n)^{\frac{1}{d}}\;,$$ as required.
\end{proof}

\medskip
\noindent {\sc Reduction of Theorem \ref{thm-main} to Theorem
\ref{thm-basis}:} We start with the following auxiliary lemma.

\medskip
\noindent
\begin{lemma} \label{lem-indep}
Assume that $\phi \in G$ mixes a function $f \in E$. Then for
every $m>0$ the functions $f,f\circ \phi,\ldots,f\circ \phi^m$ are
linearly independent elements of $E$.
\end{lemma}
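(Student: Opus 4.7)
The plan is to exploit the spectral theory of the Koopman operator $U := U_\phi$, $g \mapsto g\circ\phi$, which is unitary on $L_2(M,\mu)$ because $\phi \in G$ preserves $\mu$. I argue by contradiction: assume $f \neq 0$ (the case $f=0$ makes the conclusion vacuous) and that there is a nontrivial relation $\sum_{i=0}^m c_i U^i f = 0$ with, say, $c_m \neq 0$.

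The first step is to confine $f$ to a finite-dimensional $U$-invariant subspace with good spectral properties. The relation expresses $U^m f$ as a linear combination of $f, Uf, \ldots, U^{m-1} f$; iterating this, every $U^k f$ with $k \geq 0$ lies in $V := \mathrm{span}(f, Uf,\ldots,U^{m-1} f)$, so $V$ is $U$-invariant with $\dim V \leq m$. Since $U$ is an $L_2$-isometry, its restriction $U|_V$ is an injective isometry of a finite-dimensional Hilbert space, hence bijective and thus unitary on $V$. By the spectral theorem for a unitary on a finite-dimensional space, $V$ splits as an orthogonal direct sum $V = \bigoplus_{j=1}^r V_{\lambda_j}$ of eigenspaces with pairwise distinct eigenvalues $\lambda_j \in \mathbb{T}$, yielding a corresponding orthogonal decomposition $f = \sum_{j=1}^r f_j$ with $U f_j = \lambda_j f_j$.

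The second step is to clash with the mixing hypothesis. Pairwise orthogonality of the $f_j$ gives
$$(U^n f, f)_{L_2} \;=\; \sum_{j=1}^r \lambda_j^{\,n}\,\|f_j\|_{L_2}^2 ,$$
and Ces\`aro-averaging the modulus squared of this expression, together with the elementary fact that $\frac{1}{N}\sum_{n=0}^{N-1}(\lambda_j\overline{\lambda_k})^n \to \delta_{jk}$ (the $\lambda_j$ being distinct points of $\mathbb{T}$), yields
$$\lim_{N\to\infty}\frac{1}{N}\sum_{n=0}^{N-1}\bigl|(U^n f,f)_{L_2}\bigr|^2 \;=\; \sum_{j=1}^r \|f_j\|_{L_2}^4 .$$
Mixing of $f$ makes the left-hand side $0$, so each $f_j$ vanishes, whence $f = 0$, contradicting our assumption that $f \neq 0$.

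I do not anticipate any real obstacle beyond this standard spectral argument; the only point demanding a bit of care is the finite-dimensional reduction to $V$, and the choice $c_m \neq 0$ keeps that bookkeeping short. One could alternatively phrase the entire argument in terms of the scalar spectral measure $\mu_f$ on $\mathbb{T}$: mixing means $\mu_f$ is Rajchman (hence atomless by Wiener's lemma), while $p(U)f = 0$ forces $\mu_f$ to be supported on the finite zero set of $p(z) = \sum c_i z^i$, again yielding $\mu_f = 0$ and $f = 0$.
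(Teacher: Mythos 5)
Your argument is correct, but it takes a genuinely different route from the paper once you have reduced to the finite-dimensional $U$-invariant subspace $V$. The paper stays elementary: it observes that the sequence $\{f\circ\phi^n\}$ lives on the unit sphere of the finite-dimensional space $V$, so by compactness a subsequence $f\circ\phi^{n_k}$ converges to some $g\in V$ with $\|g\|_{L_2}=1$; then since $(f\circ\phi^{n_k},f\circ\phi^j)_{L_2}=(f\circ\phi^{\,n_k-j},f)_{L_2}\to 0$ for each fixed $j$, the limit $g$ is orthogonal to every $f\circ\phi^j$, hence to all of $V$, hence $g=0$, a contradiction. You instead invoke the spectral theorem for the finite-dimensional unitary $U|_V$ and run a Wiener-type Ces\`aro average to show that all eigencomponents of $f$ vanish; your postscript reformulation via the spectral measure (Rajchman $\Rightarrow$ atomless by Wiener's lemma, while $p(U)f=0$ forces $\mu_f$ to sit on the finite zero set of $p$) is the cleanest version of the same idea. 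What the paper's argument buys is economy of means (no spectral theorem, no harmonic analysis, just compactness of the unit sphere in $\R^p$), which fits the paper's otherwise Arzel\`a--Ascoli flavor. What your argument buys is a structurally stronger and more portable conclusion: it directly shows that a mixed function has atomless (indeed Rajchman) spectral measure, which is the standard framework in which such non-periodicity statements sit, and it would extend with no change to weak mixing along a density-one sequence and similar refinements. Two small points you already flagged and handled correctly: one must exclude $f=0$ (the lemma is vacuous otherwise, and indeed false as literally stated), and one should take the relation of minimal length or applied a power of $U^{-1}$ so that the top coefficient is nonzero, to make $V$ $U$-invariant.
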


\begin{proof} Assume that $\|f\|_{L_2}=1$ and on the contrary that for
some $m$ these functions are linearly dependent. Then for some $p
\in \N$
$$f \circ \phi^p \in V:= \text{Span}(f,f\circ \phi,\ldots,f \circ
\phi^{p-1})$$ which implies that {\it every} function of the form $f
\circ \phi^n, n \in \Z$ belongs to $V$. The space $V$ is
finite-dimensional and every element of the sequence $\{f \circ
\phi^n\}, n \in \Z$ has unit $L_2$-norm. Thus this sequence has a
subsequence converging to an element $g\in V$ of unit $L_2$-norm.
Since $\phi$ mixes $f$, we have $(g,f\circ\phi^n)_{L_2}=0$ for every
$n\in\Z$. It follows that $g=0$, contrary to $\|g\|_{L_2}=1$. This
completes the proof.
\end{proof}

\medskip
\begin{proof}[Proof of Theorem \ref{thm-main}] Put $f_i = f\circ
\phi^{i}, i\in \N$. Since $\phi$ mixes $f$ at the rate $\{a_i\}$
we have $|(f_i,f_j)_{L_2}| \leq a_{i-j}$ for all $j < i$. The
functions $\{f_i\}$ are linearly independent by Lemma
\ref{lem-indep}. Thus all the assumptions of Theorem
\ref{thm-basis} hold. Theorem \ref{thm-main} readily follows from
Theorem \ref{thm-basis} combined with the inequality
$$\max _{i =1,\ldots,n} \Lip(f_i) \leq \hG_n (\phi) \cdot
\Lip (f)\;.$$
\end{proof}

\medskip
\noindent
\begin{rem}\label{rem-Weyl} Assume that $\{f_i\}$ is an orthonormal
system (in the $L_2$-sense) of Lipschitz functions with zero mean.
Put
$$\Pi_n:= \max _{i =1,\ldots,n} \Lip(f_i)\;.$$ It follows from Theorem
\ref{thm-basis} that
$$\Pi_n \geq \text{const} \cdot n^{\frac{1}{d}}\;.$$
For an illustration, consider the Euclidean torus $\T^d =
\R^d/\Z^d$. Let $\lambda_1 \leq \lambda_2 \leq \ldots $ be the
sequence of the eigenvalues (taken with their multiplicities) of
the Laplace operator. Each $\lambda_n$ has the form $4\pi^2|v|^2$,
where $v$ runs over $\Z^d \setminus \{0\}$. Choose the sequence of
eigenfunctions $f_n$ corresponding to $\lambda_i$ so that the
eigenfunctions corresponding to $4\pi^2|v|^2$ are $\sqrt{2}\sin
2\pi (x,v)$ and $\sqrt{2}\cos 2\pi (x,v)$. It follows that
$$\Lip(f_n) \approx |v| \approx \lambda_n^{1/2} \approx n^{1/d}\;,$$
where the last asymptotic (up to a multiplicative constant) is
just the Weyl law. It follows that  the exponent of the power-law
in the right hand side of the inequality \eqref{eq-fns} is sharp.
\end{rem}

\section{From Lipschitz to  H\"older observables}\label{holder}
Assume that the metric space $(M,\rho)$ satisfies Condition
\ref{cond-met} with exponent $d>0$. Let $\phi:(M,\rho,\mu)\to
(M,\rho,\mu)$  be a bi-Lipschitz homeomorphism. Suppose that
$f:M\to\R$ is a H\"older continuous function with exponent
$\beta\in(0; 1]$ which is mixed by $\phi$. Let $\rho_\beta$ stand
for the metric on $M$ given by $\rho_\beta(x,y)=\rho(x,y)^\beta$.
Under the new metric $f$ becomes a Lipschitz function and $\phi$
remains bi-Lipschitz with
$\Gamma_{\rho_\beta}(\phi)=\Gamma(\phi)^\beta$. Moreover the
metric space $(M,\rho_\beta)$ satisfies Condition \ref{cond-met}
with exponent $d/\beta$. By Corollary~\ref{cor-log}, we have
\[\hG_n(\phi)^\beta=\widehat{(\Gamma_{\rho_\beta})}_n(\phi) \geq \const \cdot \log^{\frac{\beta}{d}}
n\] which yields the following:

\begin{cor}
If $\phi \in G$ mixes a H\"older continuous function then there
exists $\lambda
> 0$ so that
$$\hG_n(\phi) \geq \lambda \cdot \log^{\frac{1}{d}} n$$
for all natural $n$.
\end{cor}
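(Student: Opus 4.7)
The plan is to reduce the Hölder case to the Lipschitz case already handled by Corollary~\ref{cor-log} by means of the snowflaking trick that is indicated in the paragraph preceding the statement. Given a Hölder observable $f$ of exponent $\beta \in (0,1]$ on $(M,\rho)$ which is mixed by $\phi$, I would introduce the new metric $\rho_\beta(x,y) := \rho(x,y)^\beta$ on $M$. Under $\rho_\beta$ the function $f$ becomes Lipschitz, since $|f(x)-f(y)| \leq C\rho(x,y)^\beta = C\rho_\beta(x,y)$, while $\mu$ is unchanged and mixing is a purely measure-theoretic property that is insensitive to the metric.

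Next I would check that all the hypotheses of Corollary~\ref{cor-log} hold for the data $(M,\rho_\beta,\mu,\phi,f)$. Bi-Lipschitzness transfers: for every $\psi \in G$ one has $\Gamma_{\rho_\beta}(\psi) = \Gamma_\rho(\psi)^\beta$, so $\phi$ lies in the corresponding group $G$ for $\rho_\beta$. Condition~\ref{cond-met} also transfers with a new exponent: a $\delta$-net for $\rho$ is automatically a $\delta^\beta$-net for $\rho_\beta$, and writing $\eta = \delta^\beta$ one finds that $(M,\rho_\beta)$ admits an $\eta$-net of at most $\kappa \cdot \eta^{-d/\beta}$ points, so Condition~\ref{cond-met} holds with exponent $d/\beta$.

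Finally, applying Corollary~\ref{cor-log} in the snowflaked metric yields
\[
\widehat{(\Gamma_{\rho_\beta})}_n(\phi) \;\geq\; \const \cdot \log^{\beta/d} n
\]
for all sufficiently large $n$. Since $\Gamma_{\rho_\beta}(\phi^i) = \Gamma_\rho(\phi^i)^\beta$ and the map $x \mapsto x^\beta$ is increasing, the left-hand side equals $\hG_n(\phi)^\beta$. Raising both sides to the power $1/\beta$ produces the desired inequality $\hG_n(\phi) \geq \lambda \cdot \log^{1/d} n$.

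There is no serious obstacle here: the only substantive verification is the behaviour of Condition~\ref{cond-met} under the snowflaking $\rho \mapsto \rho^\beta$, and this is a one-line exponent computation. The rest of the argument consists of transporting definitions through the change of metric and invoking the already established Lipschitz result.
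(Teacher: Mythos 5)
Your proof is correct and is precisely the argument the paper gives: snowflake the metric via $\rho_\beta = \rho^\beta$, observe that $f$ becomes Lipschitz, that $\Gamma_{\rho_\beta}(\psi) = \Gamma_\rho(\psi)^\beta$, and that Condition~\ref{cond-met} holds for $(M,\rho_\beta)$ with exponent $d/\beta$, then invoke Corollary~\ref{cor-log} and raise to the power $1/\beta$. No meaningful difference from the paper's proof.
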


In the same manner an application of Theorem~\ref{exam-conv} and
Corollary~\ref{exam-power} gives the following:

\begin{cor}
Suppose that $\phi \in G$ mixes a H\"older continuous function  at
the rate $\{a_n\}$ such that $\sum a_n<\infty$. Then there exists
$\lambda
> 0$ so that
$$\hG_n(\phi) \geq \lambda \cdot n^{\frac{1}{d}}$$
for all natural $n$. If $a_n=O(1/n^\nu)$, where $0<\nu<1$ then
there exists $\lambda > 0$ so that
$$\hG_n(\phi) \geq \lambda \cdot n^{\frac{\nu}{d}}$$
for all natural $n$.
\end{cor}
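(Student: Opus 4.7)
The plan is to repeat the metric-rescaling trick used just above for the logarithmic bound, this time feeding the result into Theorem~\ref{exam-conv} and Corollary~\ref{exam-power} rather than into Corollary~\ref{cor-log}. Concretely, I would replace $\rho$ by $\rho_\beta(x,y) := \rho(x,y)^\beta$, where $\beta \in (0,1]$ is the H\"older exponent of the observable $f$. Under this change of metric three things happen simultaneously: (i) $f$ becomes Lipschitz with $\Lip_{\rho_\beta}(f)$ controlled by the H\"older seminorm; (ii) $\phi$ remains bi-Lipschitz, with $\Gamma_{\rho_\beta}(\phi) = \Gamma(\phi)^\beta$ and hence $\widehat{(\Gamma_{\rho_\beta})}_n(\phi) = \hG_n(\phi)^\beta$; (iii) $(M,\rho_\beta)$ satisfies Condition~\ref{cond-met} with exponent $d/\beta$, since a $\delta$-net in $\rho$ becomes a $\delta^\beta$-net in $\rho_\beta$, and the corresponding constant $\kappa$ transforms in a harmless way.

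Crucially, the rate of mixing is unaffected: the $L_2$-inner products $(f\circ\phi^n,f)_{L_2}$ depend only on the measure $\mu$, not on the metric. So under $\rho_\beta$ the hypotheses of Theorem~\ref{exam-conv} (respectively Corollary~\ref{exam-power}) are satisfied with the same sequence $\{a_n\}$ and with dimension parameter $d/\beta$ in place of $d$.

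Applying Theorem~\ref{exam-conv} in the case $\sum a_n < \infty$ yields
\[
\hG_n(\phi)^\beta \;=\; \widehat{(\Gamma_{\rho_\beta})}_n(\phi) \;\geq\; \const\cdot n^{\beta/d},
\]
from which, after extracting $\beta$-th roots, one obtains $\hG_n(\phi) \geq \lambda \cdot n^{1/d}$. Similarly, in the polynomial case $a_n = O(n^{-\nu})$, Corollary~\ref{exam-power} gives $\hG_n(\phi)^\beta \geq \const \cdot n^{\nu\beta/d}$, and taking $\beta$-th roots yields $\hG_n(\phi) \geq \lambda \cdot n^{\nu/d}$.

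There is essentially no obstacle here beyond bookkeeping: the only point that deserves a brief verification is item (iii), namely that if $(M,\rho)$ admits $\kappa \delta^{-d}$-point $\delta$-nets, then $(M,\rho_\beta)$ admits $\kappa \delta'^{-d/\beta}$-point $\delta'$-nets (set $\delta' = \delta^\beta$). Everything else is a direct reuse of the argument template already displayed in the preceding corollary for H\"older observables, with Theorem~\ref{exam-conv}/Corollary~\ref{exam-power} substituted for Corollary~\ref{cor-log}.
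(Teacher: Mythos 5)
Your proposal matches the paper's argument exactly: pass to the snowflake metric $\rho_\beta(x,y)=\rho(x,y)^\beta$, under which $f$ becomes Lipschitz, $\widehat{(\Gamma_{\rho_\beta})}_n(\phi)=\hG_n(\phi)^\beta$, Condition~\ref{cond-met} holds with exponent $d/\beta$, and the mixing rate is unchanged because the inner products depend only on $\mu$; then apply Theorem~\ref{exam-conv} and Corollary~\ref{exam-power} and take $\beta$-th roots. Nothing is missing.
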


\section{Existence of an adjoint sequence}
\label{sec-adjoint}
\begin{proof}[Proof of Lemma \ref{lem-adjoint}] Making a rescaling if
necessary assume that $a_n \leq 1$ for all $n$. Choose $N_k
\nearrow \infty, k \in \N$ so that $N_1 =1$ and $a_i \leq 1/k$ for
all $i \geq N_k$. Put $b_n := 1/k$ for $n \in [N_k;N_{k+1})$. Thus
$\{b_n\}$ a non-increasing positive sequence which majorates
$\{a_n\}$ and converges to zero.

Define $v(n)$ as {\it the minimal} integer $k$ with
$$\frac{b_k}{k} < \frac{1}{4n}\;.$$  Note that $v(n) \to \infty $ as $n \to \infty$. By
definition
$$\frac{b_{v(n)-1}}{v(n)-1}  \geq
\frac{1}{4n}\;.$$ Thus we get that
$$\frac{v(n)}{4n} \leq b_{v(n)-1} + \frac{1}{4n}\;,$$
and hence $v(n)/n \to 0$ which yields assumption
\eqref{eq-adjoint-2}. Furthermore, using monotonicity of $b_n$ and
inequality $b_{v(n)}/v(n) < 1/(4n)$ which follows from the
definition of $v(n)$ we estimate
$$\sum_{i: 0 < iv(n) \leq n} b_{iv(n)} \leq \frac{n}{v(n)}
\cdot b_{v(n)} \leq \frac14\,$$ and we get assumption
\eqref{eq-adjoint-1}.
\end{proof}

\bigskip

\section{Slowly growing diffeomorphisms} \label{slowgrowth}
As we have shown above, if a bi-Lipschitz homeomorphism $\phi$ of a
$d$-dimensional compact metric space mixes a Lipschitz function, the
growth rate of the sequence $\widehat{\Gamma}_n(\phi)$ is at least
$\sim \log^{1/d} n$ (see Corollary \ref{cor-log}). Furthermore,
$\widehat{\Gamma}_n(\phi)\geq \text{const}\cdot n^{\nu/d}$ provided
the mixing rate is $\sim n^{-\nu}$ for some $\nu \in (0;1)$ (see
Corollary \ref{exam-power} ). In this section we work out an example
which shows that the behavior of the sequence $\Gamma(\phi^n)$ is
essentially different from the one of $\widehat{\Gamma}_n(\phi)$
even in real-analytic category. In addition, this example gives us
an opportunity to test our lower bounds on
$\widehat{\Gamma}_n(\phi)$ in terms of the rate of mixing.

Consider the three dimensional torus $\T^3 = \R^3/\Z^3$ equipped
with the Euclidean metric and the Lebesgue measure. Fix any concave
increasing function $u: [0;+\infty) \to [0;+\infty)$ such that
\[\lim_{x\to+\infty}u(x)=+\infty,\; u(1) \geq 1 \text{ and }u(x)\leq x^{3/4}.\]

\begin{thm}\label{thm-main-slow}
There exists a real-analytic measure-preserving diffeomorphism
$\phi:\T^3 \to \T^3$ with the following properties:
\begin{itemize}
\item[{(i)}] $\phi$ mixes  a nonzero real-analytic function at the rate $\{\log
u(n)/u(n)^{1/3}\}$;
\item[{(ii)}] There exists a positive constant $c_1>0$ such that
$\Gamma(\phi^n)\leq c_1 u(n)$ for infinitely many $n\in\N$;
\item[{(iii)}] There exist positive constants $c_2,c_3$ such that
\[c_2\frac{\sqrt{n}}{\log u(n)}\leq\hG_n(\phi)\leq c_3u(\sqrt{n})\sqrt{n},\]
where the left hand side inequality holds for every natural $n$ and
the right hand side holds for infinitely many $n$.
\end{itemize}
\end{thm}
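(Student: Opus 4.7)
The plan is to construct $\phi$ explicitly as a real-analytic time change of a linear flow on $\T^3$, in the spirit of Fayad's construction of mixing real-analytic diffeomorphisms on tori of dimension $\ge 3$. Concretely, one chooses a frequency vector $\omega=(1,\alpha,\beta)\in\R^3$ with carefully designed Liouville properties and a positive real-analytic function $\psi:\T^3\to\R$ of mean one, and sets $\phi$ to be the time-one map of the vector field $X_\omega/\psi$. Invariance of the Lebesgue measure is arranged via a preliminary linear change of coordinates (or equivalently by viewing $\phi$ as preserving $\psi\, dx$ and conjugating). The continued-fraction denominators $\{q_k\}$ of the Liouville frequencies form the rigidity subsequence $\{n_k\}$ appearing in (ii), and their rate of growth is calibrated against the prescribed concave function $u$.

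The verification splits into three blocks. First, for (ii), along the times $n_k\sim q_k$ the map $\phi^{n_k}$ is a near-identity analytic diffeomorphism; standard estimates bound the Lipschitz constants of $\phi^{n_k}$ and its inverse in terms of the size of the \emph{next} best rational approximation to $\omega$, and the $q_k$ are chosen so that this bound is $c_1 u(n_k)$. Second, for (i), I would fix a single nonzero Fourier mode $f(x)=e^{2\pi i\langle v,x\rangle}$ and estimate the correlation $(f\circ\phi^n,f)_{L_2}$ by expanding $\phi^n$ along the linear flow orbit; stationary-phase type bounds combined with the chosen Liouville data yield decay $|(f\circ\phi^n,f)_{L_2}|\le\log u(n)/u(n)^{1/3}$. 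This is the analytic heart of the construction.

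Third, for (iii), the upper bound $\hG_n(\phi)\le c_3\, u(\sqrt{n})\sqrt{n}$ along a subsequence follows from submultiplicativity of $\Gamma$: writing a generic $n$ near $n_k^2$ as $n=m n_k+r$ with $m\sim n_k\sim\sqrt n$ and $r<n_k$, one has $\Gamma(\phi^n)\le\Gamma(\phi^{n_k})^m\cdot\Gamma(\phi^r)$, and the first factor is controlled by $u(n_k)^m$ while the second is handled by the crude bound $\Gamma(\phi^r)\le\Gamma(\phi)^r$; the calibration of $\{q_k\}$ then produces the stated bound. The lower bound $\hG_n(\phi)\ge c_2\sqrt{n}/\log u(n)$ is sharper than what a blind application of Theorem \ref{thm-main} would yield; I would derive it by combining the autocorrelation estimate from (i) with the inequality $\|f-f\circ\phi^n\|_{L_2}\le\Lip(f)\cdot\|\rho(\cdot,\phi^n(\cdot))\|_{L_2}$ applied to the explicit Fourier mode, exploiting the precise structure of $\phi$ to bypass the adjoint-sequence route.

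The main obstacle is the simultaneous balance between (i) and (ii): the Liouville approximants $q_k$ must be sparse enough to force the autocorrelations to decay at the prescribed rate $\log u(n)/u(n)^{1/3}$, yet frequent enough to keep $\Gamma(\phi^{n_k})$ below $c_1 u(n_k)$ and to produce the subsequence upper bound in (iii). Calibrating $\{q_k\}$ and the profile of $\psi$ against the concave $u$ so that both (i) and (ii) hold with matching constants, and establishing the sharp lower bound $\sqrt{n}/\log u(n)$ in (iii) using the specific algebraic structure of the time change rather than the abstract almost-orthogonal systems argument, is the technically demanding part.
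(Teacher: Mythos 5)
Your construction is genuinely different from the paper's, and one of the key steps does not go through. The paper does not use a time-change of a linear flow. It builds $\phi$ as a skew product on $\T^3$,
\[
\phi(x,y,z)=(x+\alpha,\,y+\alpha',\,z+\varphi(x)+\psi(y)),
\]
over a translation of $\T^2$, where $\alpha,\alpha'$ are chosen so that their continued-fraction denominators $\{q_k\}$ and $\{q'_k\}$ \emph{interlace}, calibrated against $u$ via condition \eqref{zalalpha}, and $\varphi,\psi$ are lacunary real-analytic series supported on the frequencies $q_k$, resp.\ $q'_k$. The observable is $f(x,y,z)=\sin 2\pi z$. This avoids the measure-preservation complication entirely: the Jacobian of a skew product of this form is identically $1$, so no conjugation by a Moser diffeomorphism is needed. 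In your setup the time-one map of $X_\omega/\psi$ preserves $\psi\,dx$, not Lebesgue; the conjugation you wave at would perturb $\phi^n$ and re-enter all your growth estimates, which you have not accounted for. It is not obvious this can be pushed through with matching constants.

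Second, and more seriously, your argument for the lower bound in (iii) does not work. The inequality $\|f-f\circ\phi^n\|_{L_2}\le\Lip(f)\cdot\|\rho(\cdot,\phi^n(\cdot))\|_{L_2}$ bounds the $L_2$-displacement of $\phi^n$ from \emph{below} once the autocorrelation $(f,f\circ\phi^n)_{L_2}$ is away from $1$ (indeed $\|f-f\circ\phi^n\|_{L_2}^2=2-2(f,f\circ\phi^n)_{L_2}$). But a lower bound on displacement carries no information about $\Gamma(\phi^n)$: an irrational rotation has $\Gamma\equiv 1$ yet displacement bounded away from zero. To bound $\Gamma(\phi^n)$ from below one needs to exhibit nearby points that $\phi^n$ separates, i.e.\ a lower bound on a derivative. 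The paper does exactly this: the lacunary structure of $\varphi,\psi$ gives explicit lower bounds on $\|\varphi'^{(m)}\|_\infty$ or $\|\psi'^{(m)}\|_\infty$ (estimates \eqref{dol1}--\eqref{dol2}), and the two cases $m\lessgtr (u^{-1}(e^{q_k}))^2$ are handled separately to produce the $\sqrt{n}/\log u(n)$ bound. Your autocorrelation-plus-displacement route has no mechanism to produce this.

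Finally, the mixing estimate in (i) hinges on the interlacing of $\{q_k\}$ and $\{q'_k\}$: for every $m$, at least one of the two Birkhoff sums $\varphi^{(m)}$, $\psi^{(m)}$ has a uniformly large derivative outside a small exceptional set, so the corresponding oscillatory integral is small by the van der Corput lemma (Lemma~\ref{lemfund}), and the product $I_m$ in Lemma~\ref{lemat} inherits the decay. You invoke ``stationary-phase type bounds'' with a single frequency vector $\omega=(1,\alpha,\beta)$, but you do not explain what replaces the alternation that makes the argument work for \emph{all} $m$, not just along a subsequence. Without this, (i) is not established. Your sketch of (ii) and the submultiplicativity argument for the upper bound in (iii) are roughly in the right spirit (the paper's Lemma~\ref{wnpoch} is a form of what you describe, evaluated at $m=(q'_k)^2$), but the load-bearing parts of the proof --- volume preservation, the $\Gamma$ lower bound, and the all-$m$ correlation decay --- are either missing or incorrect as stated.
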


\medskip
\noindent In particular, this theorem shows that $\Gamma(\phi^n)$
can grow arbitrarily slowly along a subsequence even when $\phi$
mixes a real-analytic function.

\begin{rem}
Taking $u(x)=x^{3\nu}$, for $0<\nu<1/4$, we get a diffeomorphism
$\phi$ which mixes a real-analytic function at the rate
$1/n^{\nu-\eps}$ (for arbitrary small $\eps>0$) and such that
$\hG_n(\phi)\geq\text{const}\cdot n^{1/2-\eps}$. Notice that
applying Corollary \ref{exam-power} we get
$\hG_n(\phi)\geq\text{const}\cdot n^{\nu/3-\eps}$. Thus Corollary
\ref{exam-power} gives a correct prediction of the appearance of a
power law in the lower bound for $\hG_n(\phi)$, though with a
non-optimal exponent. It is an interesting open problem to find the
sharp value of the exponent in Corollary \ref{exam-power}.
\end{rem}

\medskip
\noindent Our construction of a diffeomorphism $\phi$ in Theorem
\ref{thm-main-slow} and the estimate of the rate of mixing follows
the work of Fayad \cite{Fayad} (see also \cite{Fayad2}). The main
additional difficulty in our situation is due to the fact that we
have to keep track of the growth of the differential.

\medskip
\noindent {\sc Preliminaries:} We denote by $\T$ the circle group
$\R/\Z$ which we will constantly identify with the interval
$[0;1)$ with addition mod $1$. For a real number $t$ denote  by
$\|t\|$ its distance to the nearest integer number. For an
irrational $\alpha\in\T$ denote by $\{q_n\}$ its sequence of
denominators, i.e.\
\[q_0=1,\;\; q_1=a_1, \;\; q_{n+1}=a_{n+1}q_n+q_{n-1},\]
where $[0;a_1,a_2,\dots]$ is the continued fraction expansion of
$\alpha$. Then
\begin{equation}\label{ulla}
\frac{1}{2q_{n+1}}<\|q_n\alpha\|<\frac{1}{q_{n+1}}\;\;\;\;\text{
for each natural }n.
\end{equation}
Let $T:\T\to\T$ stand for the corresponding ergodic rotation
$Tx=x+\alpha$. Every measurable function $\varphi:\T\rightarrow
\R$ determines the measurable cocycle over the rotation $T$ given
by
\[\varphi^{(n)}(x)=\left\{\begin{array}{ccc}
\varphi(x)+\varphi(Tx)+\ldots+\varphi(T^{n-1}x) & \mbox{if} & n>0\\
0 & \mbox{if} & n=0\\
-\left(\varphi(T^nx)+\ldots+\varphi(T^{-1}x)\right) & \mbox{if} &
n<0.
\end{array}\right.\]
If $\varphi:\T\to\R$ is a continuous function then
\[\|\varphi^{(m+n)}\|_{\infty}\leq\|\varphi^{(m)}\|_{\infty}+\|\varphi^{(n)}\|_{\infty}\text{ and }
\|\varphi^{(-n)}\|_{\infty}=\|\varphi^{(n)}\|_{\infty}\] for all
integer $m,n$.

 Recall that
\begin{equation}\label{1}
4\|x\|\leq| e^{2\pi ix}-1|\leq 2\pi\|x\|\;\;\;\; \text{ for each
real }x.
\end{equation}

\medskip
\noindent{\sc The construction:}  Let us consider a pair of
irrational numbers $(\alpha,\alpha')$ such that the sequences of
denominators $\{q_n\}$, $\{q_n'\}$ of convergents for their
continued fraction expansion satisfy
\begin{equation}\label{zalalpha}
2u^{-1}(e^{q'_{n-1}})\leq \frac{q_n}{u(q_n)}\leq 3u^{-1}(e^{
q'_{n-1}}),\;\;2u^{-1}(e^{ q_{n}})\leq \frac{q'_n}{u(q'_n)}\leq
3u^{-1}(e^{q_{n}})
\end{equation} for any $n\geq n_0(\alpha,\alpha')$.  Here $n_0$ is a
sufficiently large positive integer which will be chosen in the
course of the proof.  For a given pair we consider real analytic
functions $\varphi, \psi$ on $\T$ given by
\begin{equation}\label{deffunk}
\varphi(x)=\sum_{n=n_0}^\infty\frac{\cos 2\pi q_{n}x}{2\pi
q_{n}u^{-1}(e^{q_{n}})},\;\; \psi(y)=\sum_{n=n_0}^\infty\frac{\cos
2\pi q'_{n}y}{2\pi q'_{n}u^{-1}(e^{q'_{n}})}.
\end{equation}
Let us consider the volume--preserving diffeomorphism
$\phi:\T^3\to\T^3$ given by
\[\phi(x,y,z)=(x+\alpha,y+\alpha',z+\varphi(x)+\psi(y)).\]
We claim that $\phi$ has all the properties listed in Theorem
\ref{thm-main-slow}.

\medskip
\noindent{\sc Starting growth estimates:} Then for each integer $n$
we have
\[\phi^n(x,y,z)=(x+n\alpha,y+n\alpha',z+\varphi^{(n)}(x)+\psi^{(n)}(y))\]
and hence
$\Gamma(\phi^n)\sim\max(\|\varphi'^{(n)}\|_{\infty},\|\psi'^{(n)}\|_{\infty})$.

\begin{lemma} \label{nierpod}
For every $x,y\in\T$ and $k\in\N$ we have
\[|\varphi'^{(q_k)}(x)|\leq\frac{6q_k}{u^{-1}(e^{q_k})},\;\;
|\varphi''^{(q_k)}(x)|\leq\frac{6q^2_k}{u^{-1}(e^{q_k})},\]
\[|\psi'^{(q'_k)}(y)|\leq\frac{48q'_k}{u^{-1}(e^{q'_k})},\;\;
|\psi''^{(q'_k)}(y)|\leq\frac{48q'^2_k}{u^{-1}(e^{q'_k})}.\]
\end{lemma}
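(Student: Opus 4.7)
The plan is to expand $\varphi'$ and $\varphi''$ as Fourier series and reduce each Birkhoff sum to the Dirichlet kernel $D_N(\theta):=(e^{2\pi i N\theta}-1)/(e^{2\pi i\theta}-1)$. By the symmetry between the pairs $(\varphi,\alpha,\{q_n\})$ and $(\psi,\alpha',\{q'_n\})$ and the interchangeable roles of the two sequences in the matching condition \eqref{zalalpha}, it suffices to prove the two estimates for $\varphi$; the bounds for $\psi$ follow by the same argument, with the worse constant $48$ absorbing the small asymmetry in \eqref{zalalpha}.

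First I would differentiate term by term to write
$$\varphi'(x)=-\sum_{n\geq n_0}\frac{\sin 2\pi q_n x}{u^{-1}(e^{q_n})},\qquad \varphi''(x)=-\sum_{n\geq n_0}\frac{2\pi q_n\cos 2\pi q_n x}{u^{-1}(e^{q_n})},$$
interchange the $n$- and $j$-summations, and use the identity $\sum_{j=0}^{q_k-1}e^{2\pi i q_n(x+j\alpha)}=e^{2\pi i q_n x}D_{q_k}(q_n\alpha)$. The problem reduces to controlling, for each $n\geq n_0$, the quantity $|D_{q_k}(q_n\alpha)|/u^{-1}(e^{q_n})$ (with an additional factor $2\pi q_n$ in the case of $\varphi''^{(q_k)}$) and summing over $n$.

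The heart of the proof is to estimate $|D_{q_k}(q_n\alpha)|$ in three regimes. For $n\geq k$ the trivial bound $|D_{q_k}(q_n\alpha)|\leq q_k$ suffices; the term $n=k$ produces the main contribution of order $q_k/u^{-1}(e^{q_k})$, and the tail $n>k$ is negligible because $u(x)\leq x^{3/4}$ forces $u^{-1}(y)\geq y^{4/3}$, so the weights $1/u^{-1}(e^{q_n})$ decay super-exponentially. The delicate regime is $n<k$: the naive geometric-series bound $|D_{q_k}(q_n\alpha)|\leq\pi/\|q_n\alpha\|\lesssim q_{n+1}$ would give an $n=k-1$ contribution of order $q_k/u^{-1}(e^{q_{k-1}})$, which is far too large. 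Here the key identity is $q_kq_n\alpha=q_np_k+q_n\beta_k$ (with $\beta_k=q_k\alpha-p_k$ and $|\beta_k|<1/q_{k+1}$), which by \eqref{1} yields $|e^{2\pi i q_kq_n\alpha}-1|\leq 2\pi q_n/q_{k+1}$, while \eqref{1} and \eqref{ulla} give $|e^{2\pi i q_n\alpha}-1|\geq 2/q_{n+1}$. Dividing produces the refined bound
$$|D_{q_k}(q_n\alpha)|\leq\frac{\pi q_nq_{n+1}}{q_{k+1}},$$
which carries the crucial extra factor $q_n/q_{k+1}$. The matching condition \eqref{zalalpha} forces $q_{k+1}$ to be astronomically larger than $q_{k-1}\cdot u^{-1}(e^{q_k})/u^{-1}(e^{q_{k-1}})$, so the entire $n<k$ sum collapses into a negligible term.

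Assembling the three pieces and choosing $n_0$ large enough to absorb the geometric-series constants yields $|\varphi'^{(q_k)}(x)|\leq 6q_k/u^{-1}(e^{q_k})$. The bound for $\varphi''^{(q_k)}$ follows verbatim, with each Fourier mode picking up an extra factor $2\pi q_n$ that is dominated by the super-exponential decay of $q_n/u^{-1}(e^{q_n})$ for $n\neq k$ and produces the extra power of $q_k$ in the $n=k$ term. The main obstacle is verifying the refined bound in the $n=k-1$ case and tracking, via \eqref{zalalpha}, that $q_{k+1}$ really does dominate $q_k\cdot u^{-1}(e^{q_k})/u^{-1}(e^{q_{k-1}})$; everything else is routine bookkeeping with geometric series.
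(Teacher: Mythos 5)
Your proposal is correct and follows essentially the same route as the paper: Fourier expansion, reduction to Dirichlet kernels $D_{q_k}(q_n\alpha)$, the trivial bound $|D_{q_k}|\leq q_k$ for $n\geq k$, and the refined bound (via $\|q_kq_n\alpha\|\leq q_n\|q_k\alpha\|$ together with \eqref{ulla}) for $n<k$, with the super-exponential decay of $q_n/u^{-1}(e^{q_n})$ after increasing $n_0$ controlling all geometric series. The only minor inaccuracy is the word "negligible" for the $n<k$ contribution: in the paper's accounting that sum is bounded by $q_k/u^{-1}(e^{q_k})$, which is of the same order as the $n=k$ main term rather than lower order, but this does not affect the validity of the argument.
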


\begin{proof} Since
\[\varphi^{(m)}(x)=\sum_{n=n_0}^\infty\frac{1}{2\pi q_{n}u^{-1}(e^{q_{n}})}\re e^{2\pi i q_{n}x}
\frac{e^{2\pi imq_{n}\alpha}-1}{e^{2\pi iq_{n}\alpha}-1},\] we
obtain
\begin{equation}\label{kocpoch}
\varphi'^{(m)}(x)=\sum_{n=n_0}^\infty\frac{1}{u^{-1}(e^{q_{n}})}\im
e^{2\pi i q_{n}x} \frac{e^{2\pi imq_{n}\alpha}-1}{e^{2\pi
iq_{n}\alpha}-1},
\end{equation}
hence
\[|\varphi'^{(q_k)}(x)|\leq\sum_{n=n_0}^\infty
\frac{1}{u^{-1}(e^{q_{n}})}\frac{|e^{2\pi
iq_kq_{n}\alpha}-1|}{|e^{2\pi iq_{n}\alpha}-1|}.\] In the next chain
of inequalities we use that by increasing $n_0$ we can assume that
$\sum_{n=n_0}^{\infty} {q_n}/{u^{-1}(e^{q_{n}})} < 1/4$. We have
\begin{eqnarray*}
\lefteqn{\sum_{n=n_0}^{k-1}
\frac{1}{u^{-1}(e^{q_{n}})}\frac{|e^{2\pi
iq_kq_{n}\alpha}-1|}{|e^{2\pi iq_{n}\alpha}-1|}}\\&\leq &
\sum_{n=n_0}^{k-1}
\frac{2}{u^{-1}(e^{q_{n}})}\frac{\|q_kq_{n}\alpha\|}{\|q_{n}\alpha\|}\leq\sum_{n=n_0}^{k-1}
\frac{2}{u^{-1}(e^{q_{n}})}\frac{q_{n}\|q_k\alpha\|}{\|q_{n}\alpha\|}
\\&\leq&  \sum_{n=n_0}^{k-1}
\frac{4}{u^{-1}(e^{q_{n}})}\frac{q_{n}q_{n+1}}{q_{k+1}}\leq
\frac{4q_{k}}{q_{k+1}}\sum_{n=n_0}^{k-1}
\frac{q_n}{u^{-1}(e^{q_{n}})}\leq\frac{q_{k}}{q_{k+1}} \leq
\frac{q_{k}}{q'_{k}}\;.
\end{eqnarray*}
In view of (\ref{zalalpha}),
\[\frac{q_{k}}{q'_{k}} \leq
\frac{1}{2u(q'_k)}\frac{q_{k}}{u^{-1}(e^{q_{k}})}\leq
\frac{q_{k}}{u^{-1}(e^{q_{k}})}\;.\]
It follows that
\[\sum_{n=n_0}^{k-1}
\frac{1}{u^{-1}(e^{q_{n}})}\frac{|e^{2\pi
iq_kq_{n}\alpha}-1|}{|e^{2\pi iq_{n}\alpha}-1|}\leq
\frac{q_{k}}{u^{-1}(e^{q_{k}})}\;.\]
Furthermore,
\[\sum_{n=k}^\infty \frac{1}{u^{-1}(e^{q_{n}})}\frac{|e^{2\pi
iq_kq_{n}\alpha}-1|}{|e^{2\pi iq_{n}\alpha}-1|}\leq
\sum_{n=k}^\infty \frac{2q_k}{u^{-1}(e^{q_{n}})}\leq
\frac{4q_k}{u^{-1}(e^{q_{k}})},\]  and the required upper bound
for $|\varphi'^{(q_k)}(x)|$ follows.

 Since
\[|\varphi''^{(q_k)}(x)|\leq\sum_{n=n_0}^\infty \frac{2\pi q_n}{u^{-1}(e^{q_{n}})}\frac{|e^{2\pi
iq_kq_{n}\alpha}-1|}{|e^{2\pi iq_{n}\alpha}-1|},\] similar arguments
to those above show that $|\varphi''^{(q_k)}(x)|\leq 48
q_n^2/u^{-1}(e^{q_{k}})$.

The remaining inequalities are proved similarly.
\end{proof}

\begin{lemma}\label{wnpoch}
For every natural $m$ and $k$ we have
\[\|\varphi'^{(m)}\|_{\infty}\leq\frac{6m}{u^{-1}(e^{q_{k}})}+q_k,\;\;
\|\varphi''^{(m)}\|_{\infty}\leq\frac{48mq_k}{u^{-1}(e^{q_{k}})}+q_k,\]
\[\|\psi'^{(m)}\|_{\infty}\leq\frac{6m}{u^{-1}(e^{q'_{k}})}+q'_k,\;\;
\|\psi''^{(m)}\|_{\infty}\leq\frac{48mq'_k}{u^{-1}(e^{q'_{k}})}+q'_k.\]
\end{lemma}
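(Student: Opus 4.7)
The plan is to reduce the four bounds for arbitrary $m$ to the bounds at $m=q_k$ and $m=q'_k$ already provided by Lemma \ref{nierpod}, via the Euclidean division $m = aq_k + r$ with $0 \le r < q_k$, combined with the subadditivity $\|\varphi^{(m+n)}\|_{\infty} \le \|\varphi^{(m)}\|_{\infty} + \|\varphi^{(n)}\|_{\infty}$ recorded in the preliminaries. Since the base dynamics is a rotation, differentiation commutes with the Birkhoff-sum operation: $(\varphi^{(m)})' = (\varphi')^{(m)}$ and similarly for the second derivative. Subadditivity then gives
\[\|\varphi'^{(m)}\|_{\infty} \;\le\; a\,\|\varphi'^{(q_k)}\|_{\infty} + \|\varphi'^{(r)}\|_{\infty}.\]

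For the leading term, Lemma \ref{nierpod} supplies $\|\varphi'^{(q_k)}\|_{\infty} \le 6q_k/u^{-1}(e^{q_k})$, and the bound $a \le m/q_k$ yields $a\,\|\varphi'^{(q_k)}\|_{\infty} \le 6m/u^{-1}(e^{q_k})$. For the remainder, I would use the crude bound $\|\varphi'^{(r)}\|_{\infty} \le r\,\|\varphi'\|_{\infty}$. Differentiating (\ref{deffunk}) term by term gives $\|\varphi'\|_{\infty} \le \sum_{n \ge n_0} 1/u^{-1}(e^{q_n})$, which can be forced to be at most $1$ by enlarging $n_0$, exactly the device used inside the proof of Lemma \ref{nierpod}. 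Consequently $\|\varphi'^{(r)}\|_{\infty} \le r < q_k$, and the first inequality of the lemma follows.

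The estimate for $\varphi''^{(m)}$ is entirely analogous: using $\|\varphi''^{(q_k)}\|_{\infty} \le 48 q_k^2/u^{-1}(e^{q_k})$ (the bound obtained in the course of the proof of Lemma \ref{nierpod}) together with $a \le m/q_k$ yields the leading term $48m q_k/u^{-1}(e^{q_k})$, while differentiating (\ref{deffunk}) twice gives $\|\varphi''\|_{\infty} \le 2\pi \sum_{n \ge n_0} q_n/u^{-1}(e^{q_n})$, again at most $1$ after enlarging $n_0$, so that $\|\varphi''^{(r)}\|_{\infty} \le r \le q_k$. The two inequalities for $\psi$ follow by repeating this argument verbatim with $(\varphi, \alpha, q_k)$ replaced by $(\psi, \alpha', q'_k)$.

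There is no substantive obstacle here: the argument is a simple pigeonhole on the Birkhoff sum combined with the tail-smallness already exploited in Lemma \ref{nierpod}. The only technical point that demands attention is the consistency of $n_0$: it must be fixed once and for all large enough so that simultaneously $\|\varphi'\|_{\infty}$, $\|\varphi''\|_{\infty}$, $\|\psi'\|_{\infty}$, $\|\psi''\|_{\infty} \le 1$. This is comfortably available because $u^{-1}(e^{q_n})$ and $u^{-1}(e^{q'_n})$ dominate any power of $q_n$ and $q'_n$ along the sequences of denominators arising from the Diophantine choice (\ref{zalalpha}).
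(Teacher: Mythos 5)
Your proof is correct and follows essentially the same route as the paper: Euclidean division $m=pq_k+r$, subadditivity of $\|\cdot^{(n)}\|_\infty$ applied to $\varphi'$ and $\varphi''$ (valid since differentiation commutes with Birkhoff sums over a rotation), the estimate of Lemma~\ref{nierpod} for the $p\|\varphi'^{(q_k)}\|_\infty$ part, and the crude bound $\|\varphi'^{(r)}\|_\infty\leq r\|\varphi'\|_\infty<q_k$ using that $n_0$ was taken large enough. The paper's own proof writes out exactly this computation for $\|\varphi'^{(m)}\|_\infty$ and dismisses the other three as ``proved similarly,'' so your version is in fact slightly more explicit about the uniform choice of $n_0$.
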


\begin{proof} Write $m$ as $m=pq_k+r$, where $p=[m/q_k]$ and $0\leq
r<q_k$. Then
\[\|\varphi'^{(m)}\|_{\infty}\leq p\|\varphi'^{(q_k)}\|_{\infty}+\|\varphi'^{(r)}\|_{\infty}
\leq
\frac{m}{q_k}\frac{6q_k}{u^{-1}(e^{q_{k}})}+r\|\varphi'\|_{\infty}\leq\frac{6m}{u^{-1}(e^{q_{k}})}+q_k.
\]
The remaining inequalities are proved similarly.
\end{proof}

\noindent{\sc A van der Corput like Lemma:\footnote{It is known
also as a stationary phase argument.}} For estimating the rate of
mixing, we shall need the following version of the van der Corput
Lemma:

\medskip
\begin{lemma}\label{lemfund}
Let $f:\T\rightarrow{\mathbb{R}}$ be a $C^1$ function. Suppose
there exist a family $\{(a_j;b_j)\subset\T:j=1,\ldots,s\}$ of
pairwise disjoint intervals and a real positive number $a$ such
that $|f'(x)|\geq a>0$ for all
$x\in\T\setminus\bigcup_{j=1}^s(a_j;b_j)$. Then
\begin{equation}
\left|\int_\T e^{2\pi
if(x)}dx\right|\leq\frac{1}{2\pi}\frac{\|f''\|_{\infty}}{a^2}+
\frac{s}{\pi a}+\sum_{j=1}^s(b_j-a_j). \label{3}\end{equation}
\end{lemma}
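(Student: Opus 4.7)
The plan is to prove this by the standard van der Corput / stationary phase argument: split the integral into a ``bad'' part over $\bigcup_{j=1}^s (a_j,b_j)$ where we have no lower bound on $|f'|$ and a ``good'' part over the complement where $|f'|\geq a$, then handle each part separately.

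For the bad part, I simply bound $|e^{2\pi i f(x)}|=1$ and integrate to get the contribution $\sum_{j=1}^s (b_j-a_j)$, which accounts for the last term on the right-hand side of \eqref{3}.

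For the good part, which is a union of at most $s$ closed intervals on the circle $\T$, I would integrate by parts on each such interval $[c,d]$. Writing
\[
e^{2\pi i f(x)}=\frac{1}{2\pi i f'(x)}\cdot\frac{d}{dx}e^{2\pi i f(x)},
\]
integration by parts yields
\[
\int_c^d e^{2\pi i f(x)}\,dx=\left[\frac{e^{2\pi i f(x)}}{2\pi i f'(x)}\right]_c^d+\int_c^d \frac{f''(x)}{2\pi i f'(x)^2}e^{2\pi i f(x)}\,dx.
\]
The boundary terms are each bounded in absolute value by $1/(2\pi a)$ since $|f'|\geq a$ at the endpoints; the interior integrand is bounded by $\|f''\|_\infty/(2\pi a^2)$. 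Summing over the at most $s$ good intervals produces at most $2s$ boundary contributions of size $1/(2\pi a)$, giving $2s/(2\pi a)=s/(\pi a)$, and the interior integrals combined are bounded by $\|f''\|_\infty/(2\pi a^2)$ since their total length is at most $1$.

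Adding the three contributions gives exactly the right-hand side of \eqref{3}. The main thing to be careful about is the bookkeeping of endpoints on the circle: after removing $s$ pairwise disjoint open intervals from $\T$, the remaining set has at most $s$ connected components and hence at most $2s$ boundary points, which is where the factor $s/(\pi a)$ (rather than $s/(2\pi a)$) comes from. There is no genuine analytic obstacle here; the lemma is essentially a one-line consequence of integration by parts once the partition into good and bad sets is set up correctly.
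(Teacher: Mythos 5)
Your proof is correct and follows essentially the same route as the paper: bound the integral over the bad set $\bigcup(a_j;b_j)$ trivially by its total length, and integrate by parts on each of the $s$ complementary arcs, bounding the $2s$ boundary terms by $1/(2\pi a)$ each and the interior integrals by $\|f''\|_\infty/(2\pi a^2)$ using that the total arc length is at most $1$. The bookkeeping of endpoints around the circle is exactly the point the paper handles by fixing the cyclic order $a_1<b_1<\cdots<a_s<b_s<a_1$, so there is no discrepancy.
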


\begin{proof} Without loss of generality we can assume that
$a_1<b_1<\ldots<a_s<b_s<a_1$. Put $D=\bigcup_{j=1}^s(a_j;b_j)$ and
$a_{s+1}=a_1$. Then
\begin{eqnarray*}
\left|\int_\T e^{2\pi if(x)}\,dx\right|
&\leq&\left|\int_{\T\setminus D}e^{2\pi if(x)}\,dx
\right|+\sum_{j=1}^s(b_j-a_j)\\ & = & \left|\int_{\T\setminus
D}\frac{1}{2\pi if'(x)}\,de^{2\pi if(x)}\right|+
\sum_{j=1}^s(b_j-a_j).
\end{eqnarray*}
Integrating by parts gives
\begin{eqnarray*} \lefteqn{\left|\int_{\T\setminus D}\frac{1}{2\pi
if'(x)}\,de^{2\pi if(x)}\right|}\\
&=&\left|\sum_{j=1}^s\left(\frac{e^{2\pi if(a_{j+1})}}{2\pi
f'(a_{j+1})}- \frac{e^{2\pi if(b_j)}}{2\pi
f'(b_j)}-\frac{1}{2\pi}\int_{b_j}^{a_{j+1}}e^{2\pi
if(x)}\,d\left(\frac{1}{f'(x)}\right)\right)\right|\\
&=&\left|\sum_{j=1}^s\left(\frac{e^{2\pi if(a_{j+1})}}{2\pi
f'(a_{j+1})}- \frac{e^{2\pi if(b_j)}}{2\pi
f'(b_j)}+\frac{1}{2\pi}\int_{b_j}^{a_{j+1}}e^{2\pi
if(x)}\frac{f''(x)}{(f'(x))^2}\,dx\right)\right|\\
& \leq & \frac{1}{2 \pi}\sum_{j=1}^s\left[\left(\frac{1}{|
f'(a_{j})|}+\frac{1}{|f'(b_j)|}\right)+\sum_{j=1}^s|a_{j+1}-b_j|
\frac{\|f''\|_{\infty}}{a^2}\right]\\ & \leq &
\frac{1}{2\pi}\frac{\|f''\|_{\infty}}{a^2}+\frac{s}{\pi a}.
\end{eqnarray*}
\end{proof}

\begin{lemma}\label{lemat}
There exists $C>0$ such that
\[I_m:=\left|\int_{\T^2}e^{2\pi i
(\varphi^{(m)}(x)+\psi^{(m)}(y))}\,dxdy\right|\leq C\frac{\log
u(m)}{u(m)^{1/3}}.\]
\end{lemma}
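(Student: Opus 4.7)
The approach is to exploit the separation of variables in the phase: by Fubini's theorem,
\[
I_m \;=\; \Bigl|\int_\T e^{2\pi i\varphi^{(m)}(x)}\,dx\Bigr| \cdot \Bigl|\int_\T e^{2\pi i\psi^{(m)}(y)}\,dy\Bigr|
\;=:\; J_m(\varphi)\,J_m(\psi),
\]
so it is enough to bound each one-variable oscillatory integral. The two bounds will have the same structural form with the roles of $(q_n,\alpha)$ and $(q'_n,\alpha')$ exchanged, and the arithmetic relation~\eqref{zalalpha} will translate both of them into estimates in terms of $m$ alone.

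Focus on $J_m(\varphi)$. I would apply the van der Corput estimate, Lemma~\ref{lemfund}, with $f = \varphi^{(m)}$. The second-derivative input is supplied by Lemma~\ref{wnpoch}, which gives $\|\varphi''^{(m)}\|_\infty \leq 48m q_k/u^{-1}(e^{q_k}) + q_k$ for any integer $k$ (to be chosen optimally below). The more delicate input is a lower bound for $|\varphi'^{(m)}|$ off a small set. For this I would split $\varphi^{(m)} = \Phi_k + R_k$, where $\Phi_k$ is the single $n=k$ block in the Fourier series for $\varphi^{(m)}$ coming from~\eqref{deffunk}, and $R_k$ is the sum over $n \neq k$. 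The derivative $\Phi_k'$ is a pure sinusoid of frequency $q_k$ with amplitude
\[
A_k \;=\; \frac{1}{u^{-1}(e^{q_k})}\cdot \Bigl|\frac{e^{2\pi i m q_k\alpha}-1}{e^{2\pi i q_k\alpha}-1}\Bigr|,
\]
which is estimated using~\eqref{1} and~\eqref{ulla}, whereas the tail $R_k'$ turns out to be uniformly small compared with $A_k$ by the same geometric-sum manipulation that proves Lemma~\ref{nierpod}.

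Having isolated the principal sinusoid, the set $\{|\Phi_k'| < a\}$ is contained in $O(q_k)$ intervals of total length $\lesssim a/A_k$ around the zeros of $\Phi_k'$. Choosing the threshold $a$ slightly above $\|R_k'\|_\infty$ gives $|\varphi'^{(m)}| \geq a/2$ on the complement, and feeding these data into~\eqref{3} yields an estimate of the shape
\[
J_m(\varphi) \;\lesssim\; \frac{m q_k/u^{-1}(e^{q_k})}{a^2} \;+\; \frac{q_k}{a} \;+\; \frac{a}{A_k}.
\]
Balancing these three terms in $a$, and then choosing $k$ so that the resulting expression is as small as possible, pins down $a$ and $k$ uniquely up to rounding; the relation~\eqref{zalalpha} then converts $u^{-1}(e^{q_k})$ to a function of $q'_{k+1}/u(q'_{k+1})$, and running the same analysis through $J_m(\psi)$ translates everything back to quantities depending only on $m$. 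The logarithmic factor $\log u(m)$ in the final bound arises from the fact that $k$ must be an integer, forcing a near-optimal rather than optimal choice and costing one $\log$ at the balance point.

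The main obstacle I expect is the uniform smallness of the tail $R_k'$: the contributions from $n < k$ are controlled by the lower bound $\|q_n\alpha\| \geq 1/(2q_{n+1})$, which prevents the denominators $|e^{2\pi i q_n\alpha}-1|$ from blowing up, while the contributions from $n > k$ are controlled by the very rapid decay of $1/u^{-1}(e^{q_n})$ combined with $|e^{2\pi i m q_n\alpha}-1| \leq 2\pi m\|q_n\alpha\|$. Matching both tails against the principal amplitude $A_k$ with constants uniform in $k$, in the spirit of the bookkeeping performed in the proof of Lemma~\ref{nierpod}, is what ultimately fixes the exponent $1/3$ in the final estimate.
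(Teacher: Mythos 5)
Your high-level strategy is the same as the paper's: factor $I_m$ by Fubini, apply the van der Corput Lemma~\ref{lemfund} to one factor, isolate the single dominant sinusoid $n=k$ in $\varphi'^{(m)}$ coming from~\eqref{kocpoch}, bound the tails $n<k$ and $n>k$ in the spirit of Lemma~\ref{nierpod}, delete small neighborhoods of the zeros of that sinusoid, and feed the data into~\eqref{3}. That outline is correct and you have correctly identified the crux (uniform tail control vs.\ principal amplitude).

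However, there is one genuine structural gap. Your opening line, ``it is enough to bound each one-variable oscillatory integral,'' cannot be realized: for a \emph{fixed} $m$, only one of the two factors admits a nontrivial estimate. The scales $u^{-1}(e^{q_k})$ and $u^{-1}(e^{q'_k})$ interleave on the real line, so $m/u(m)$ lies either in an interval of the form $[u^{-1}(e^{q_k});u^{-1}(e^{q'_k})]$ or of the form $[u^{-1}(e^{q'_k});u^{-1}(e^{q_{k+1}})]$. In the first case one bounds $\bigl|\int e^{2\pi i\varphi^{(m)}}\bigr|$ using the $n=k$ resonance for $\alpha$ (here $m\leq q_{k+1}/2$ via concavity of $u$, so $\|mq_k\alpha\|=m\|q_k\alpha\|$ and $A_k\gtrsim m/u^{-1}(e^{q_k})\geq u(m)$ is large); in the second case the same analysis works only for $\psi$ and $\alpha'$, and the $\varphi$-factor is just bounded by $1$. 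This is exactly why the example lives on $\T^3$ rather than $\T^2$ and why condition~\eqref{zalalpha} is imposed. As written, your plan would try to estimate $J_m(\varphi)$ for every $m$, and would get stuck in the second range, where $m$ can exceed $q_{k+1}/2$ and the resonance computation $\|mq_k\alpha\|=m\|q_k\alpha\|$ fails.

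Two smaller points. First, there is no ``balancing over $k$'' in the argument: $k$ is uniquely determined by where $m/u(m)$ falls in the interleaved ladder, and the threshold for $|\varphi'^{(m)}|$ and the widths of the deleted intervals are then read off directly (the paper uses half-width $\tfrac{1}{2q_k}\cdot u(m)^{-1/3}$). Second, the logarithmic loss does not come from rounding $k$ to an integer; it comes from the crude bound $q_k\leq\log u(m)$ (a consequence of $u^{-1}(e^{q_k})\leq m/u(m)\leq m$ together with $u(m)\leq m^{3/4}$), which multiplies the $u(m)^{-1/3}$ gain in the final inequality.
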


\begin{proof}  For each $m$ large enough there  exists a natural number
$k\geq n_0$ such that
\[u^{-1}(e^{q_{k}})\leq \frac{m}{u(m)} \leq u^{-1}(e^{q'_{k}})\text{ or }
u^{-1}(e^{q'_{k}})\leq \frac{m}{u(m)} \leq u^{-1}(e^{q_{k+1}}).\]
Suppose that $m/u(m)\in [u^{-1}(e^{q_{k}});u^{-1}(e^{q'_{k}})]$.
Then
\[\frac{m}{u(m)}\leq u^{-1}(e^{q'_{k}})\leq\frac{q_{k+1}}{2u(q_{k+1})}\leq \frac{q_{k+1}/2}{u(q_{k+1}/2)} \]
and hence $m\leq q_{k+1}/2$ because of the concavity of $u$.

 Put
\[a_j=\frac{1}{2q_k}\left(j-\frac{1}{u(m)^{1/3}}\right)-\frac{(m-1)\alpha}{2},\;\;
b_j=\frac{1}{2q_k}\left(j+\frac{1}{u(m)^{1/3}}\right)-\frac{(m-1)\alpha}{2}\]
for $j=1,\ldots,2q_k$. If
$x\in\T\setminus\bigcup_{j=1}^{2q_k}(a_j;b_j)$, then
\[1/u(m)^{1/3}\leq\|2q_k(x+(m-1)\alpha/2)\|\leq |\sin 2\pi
q_k(x+(m-1)\alpha/2)|.\] By (\ref{kocpoch}),
\begin{eqnarray*}
|\varphi'^{(m)}(x)|&\geq&\frac{1}{u^{-1}(e^{q_{k}})}\left|\im
e^{2\pi i q_{k}x} \frac{e^{2\pi imq_{k}\alpha}-1}{e^{2\pi
iq_{k}\alpha}-1}\right|\\&& -
\sum_{n=n_0}^{k-1}\frac{1}{u^{-1}(e^{q_{n}})}\frac{|e^{2\pi
imq_{n}\alpha}-1|}{|e^{2\pi iq_{n}\alpha}-1|}-
\sum_{n=k+1}^{\infty}\frac{1}{u^{-1}(e^{q_{n}})}\frac{|e^{2\pi
imq_{n}\alpha}-1|}{|e^{2\pi iq_{n}\alpha}-1|}.
\end{eqnarray*}
Note that
\begin{eqnarray*}
\lefteqn{\left|\im e^{2\pi i q_{k}x} \frac{e^{2\pi
imq_{k}\alpha}-1}{e^{2\pi iq_{k}\alpha}-1}\right|}\\&=&
\left|\frac{1}{2i}\left(e^{2\pi iq_{k}x}\frac{e^{2\pi
iq_{k}m\alpha}-1}{e^{2\pi iq_{k}\alpha}-1}- e^{-2\pi
iq_{k}x}\frac{e^{-2\pi iq_{k}m\alpha}-1} {e^{-2\pi
iq_{k}\alpha}-1}\right)\right|\\ &=&
\left|\frac{1}{2i}\frac{e^{2\pi iq_{k}m\alpha}-1}{e^{2\pi
iq_{k}\alpha}-1}\left(e^{2\pi iq_{k}x}- e^{-2\pi
iq_{k}(x+(m-1)\alpha)}\right)\right|
\\ &=& \frac{|e^{2\pi
iq_{k}m\alpha}-1|}{|e^{2\pi iq_{k}\alpha}-1|}|\sin 2\pi
q_{k}(x+(m-1)\alpha/2)|.
\end{eqnarray*}
Since $m\leq q_{k+1}/2$ and $\|q_{k}\alpha\|<1/q_{k+1}$, we have
\[\|mq_{k}\alpha\|\leq m\|q_{k}\alpha\|\leq \frac{1}{2}q_{k+1}\|q_{k}\alpha\|<\frac{1}{2},\]
hence $\|mq_{k}\alpha\|= m\|q_{k}\alpha\|$. It follows that
\[\frac{|e^{2\pi
iq_{k}m\alpha}-1|}{|e^{2\pi iq_{k}\alpha}-1|}\geq
\frac{\|q_{k}m\alpha\|}{2\|q_{k}\alpha\|}=\frac{m}{2}.\] Thus
$${\left|\im e^{2\pi i q_{k}x} \frac{e^{2\pi
imq_{k}\alpha}-1}{e^{2\pi iq_{k}\alpha}-1}\right|} \geq
\frac{m}{2u(m)^{1/3}u^{-1}(e^{q_k})}\;.$$

\noindent
 Since
$\|q_{n}\alpha\|>1/(2q_{n+1})$, we have
\begin{eqnarray*}\sum_{n=n_0}^{k-1}\frac{1}{u^{-1}(e^{q_{n}})}\frac{|e^{2\pi
imq_{n}\alpha}-1|}{|e^{2\pi iq_{n}\alpha}-1|}&\leq&
\sum_{n=n_0}^{k-1}\frac{1}{u^{-1}(e^{q_{n}})}\frac{1}{2\|q_{n}\alpha\|}\leq
\sum_{n=n_0}^{k-1}\frac{1}{u^{-1}(e^{q_{n}})}q_{n+1}\\&\leq&
q_{k}\sum_{n=n_0}^{k-1}\frac{1}{u^{-1}(e^{q_{n}})}\leq q_k.
\end{eqnarray*}
Moreover
\[\sum_{n=k+1}^{\infty}\frac{1}{u^{-1}(e^{q_{n}})}\frac{|e^{2\pi
imq_{n}\alpha}-1|}{|e^{2\pi iq_{n}\alpha}-1|}\leq
m\sum_{n=k+1}^{\infty}\frac{2}{u^{-1}(e^{q_{n}})}\leq
\frac{4m}{u^{-1}(e^{q_{k+1}})}.\] Therefore, if
$x\in\T\setminus\bigcup_{j=1}^{2q_k}(a_j;b_j)$, then
\[|\varphi'^{(m)}(x)|\geq\frac{m}{2u(m)^{1/3}u^{-1}(e^{q_k})}-q_k-\frac{4m}{u^{-1}(e^{q_{k+1}})}.\]
Since $u^{-1}(e^{q_k})\leq m/u(m)\leq m$, we have
\[q_k\leq \log u(m)\leq\frac{\log u(m)}{u(m)^{2/3}}\frac{m}{u(m)^{1/3}u^{-1}(e^{q_k})}.\]
Moreover, since
\[u^{-1}(e^{q_k})\leq \frac{m}{u(m)}\leq q_{k+1} \text{ and }u(m)\leq m^{3/4},\]
we have
\begin{eqnarray*}\frac{m}{u^{-1}(e^{q_{k+1}})}&\leq&
\frac{m}{u(m)^{1/3}u^{-1}(e^{q_k})}\frac{m/(u(m))^{2/3}}{u^{-1}(e^{m/u(m)})}\\
&\leq&
\frac{m}{u(m)^{1/3}u^{-1}(e^{q_k})}\frac{(m/u(m))^{2}}{u^{-1}(e^{m/u(m)})}
.\end{eqnarray*} Therefore, for $m$ large enough,
\begin{equation}\label{dolwz}
|\varphi'^{(m)}(x)|\geq \frac{m}{4u(m)^{1/3}u^{-1}(e^{q_k})}\text{
for all } x\in\T\setminus\bigcup_{j=1}^{2q_k}(a_j;b_j).
\end{equation}
On the other hand, by Lemma~\ref{wnpoch},
\[ |\varphi''^{(m)}(x)|\leq\frac{48mq_k}{u^{-1}(e^{q_k})}+q_k\leq \frac{50mq_k}{u^{-1}(e^{q_k})}.\]
An application of Lemma~\ref{lemfund} for the function
$\varphi^{(m)}$ and the family of intervals $(a_i;b_i)$,
$i=1,\ldots,2q_k$ gives
\begin{eqnarray*}\lefteqn{\left|\int_{\T}e^{2\pi
i\varphi^{(m)}(x)}\,dx\right|}\\
&\leq&\frac{1}{2\pi}\frac{\frac{50mq_k}{u^{-1}(e^{q_k})}}{\left(\frac{m}{4u(m)^{1/3}u^{-1}(e^{q_k})}\right)^2}+
\frac{2q_{k}}{
\frac{\pi m}{4u(m)^{1/3}u^{-1}(e^{q_k})}}+\frac{2}{u(m)^{1/3}}\\
&=&\frac{400q_ku^{-1}(e^{q_k})u(m)^{2/3}}{\pi m}
+\frac{4q_ku^{-1}(e^{q_k})u(m)^{1/3}}{\pi
m}+\frac{2}{u(m)^{1/3}}\\&\leq&
\frac{200q_ku^{-1}(e^{q_k})u(m)^{2/3}}{ m} +\frac{2}{u(m)^{1/3}}.
\end{eqnarray*}
Since $u^{-1}(e^{q_k})\leq m/u(m)$, we have $q_k \leq \log u(m)$ and
\[\frac{q_ku^{-1}(e^{q_k})u(m)^{2/3}}{m}\leq \frac{\log u(m)}{u(m)^{1/3}}.\] Consequently
\[\left|\int_{\T}e^{2\pi i\varphi^{(m)}(x)}\,dx\right|\leq 202\frac{\log u(m) }{u(m)^{1/3}}.\]
When $m/u(m)\in [u^{-1}(e^{q'_{k}});u^{-1}(e^{q_{k+1}})]$,
proceeding in the same way we obtain
\[\left|\int_{\T}e^{2\pi i\psi^{(m)}(y)}\,dy\right|\leq 202 \frac{\log u(m) }{u(m)^{1/3}}.\]
Therefore for each natural $m$ we have
\[I_m=\left|\int_{\T}e^{2\pi i\varphi^{(m)}(x)}dx\right| \left|\int_{\T}e^{2\pi
i\psi^{(m)}(y)}dy\right|\leq 202\frac{\log u(m)}{u(m)^{1/3}}.\]
\end{proof}

\begin{proof}[Proof of Theorem~\ref{thm-main-slow}] $\;$

\medskip
\noindent {\bf (i):} Take $f:\T^3\to\R$ given by $f(x,y,z)=\sin 2\pi
z$. Then in view of Lemma~\ref{lemat} we obtain
\begin{eqnarray*}|(f\circ\phi^n,f)|&=&\frac{1}{2}\left|\im\int_{\T^{2}}e^{ 2\pi
i(\varphi^{(n)}(x)+\psi^{(n)}(y))}\,dxdy\right|\\
&\leq&\frac{1}{2}\left|\int_{\T^{2}}e^{ 2\pi
i(\varphi^{(n)}(x)+\psi^{(n)}(y))}\,dxdy\right|\leq
\const\cdot\frac{\log u(n)}{u(n)^{1/3}}
\end{eqnarray*}
for all $n\in\N$ large enough.

\medskip
\noindent {\bf (ii):} Since
\[\phi^n(x,y,z)=(x+n\alpha,y+n\alpha',z+\varphi^{(n)}(x)+\psi^{(n)}(y))\]
it suffices to show that
$\max(\|\varphi'^{(n)}\|_{\infty},\|\psi'^{(n)}\|_{\infty})\leq
c_1u(n)$ for infinitely many $n\in\N$. By Lemma~\ref{nierpod} and
Lemma~\ref{wnpoch},
\[\|\varphi'^{(q_k)}\|_{\infty}\leq 1\text{ and }
\|\psi'^{(q_k)}\|_{\infty}\leq
\frac{6q_k}{u^{-1}(e^{q'_{k-1}})}+q'_{k-1}.\] From
(\ref{zalalpha}) we have
\[q'_{k-1}\leq \log u(q_k)\text{ and }u^{-1}(e^{q'_{k-1}})\geq\frac{q_k}{3u(q_k)}.\]
It follows that
\[\|\psi'^{(q_k)}\|_{\infty}\leq 18u(q_k)+\log u(q_k)\leq 20u(q_k)\]
for all $k$ large enough.

\medskip
\noindent{\bf (iii):} Set
\[g_m:=\max(\|\varphi'^{(m)}\|_{\infty},\|\psi'^{(m)}\|_{\infty})\text{ and
}\widehat{g}_m=\max_{0\leq i\leq m}g_i.\] It suffices to show that
\begin{equation}\label{eq-doubleest}
c_2\frac{\sqrt{m}}{\log u(m)}\leq \widehat{g}_m  \leq
c_3u(\sqrt{m})\sqrt{m}\;,\end{equation} where the left hand side
inequality holds for every natural $m$ and the right hand side holds
for infinitely many $m$.

\medskip

By Lemma~\ref{wnpoch},
\begin{equation}\label{gora}
\widehat{g}_m\leq \max\left(\frac{6m}{u^{-1}(e^{q_{k}})}+q_k,
\frac{6m}{u^{-1}(e^{q'_{k}})}+q'_k\right)
\end{equation}
for every natural $m$ and $k$. Choose $x$ and $y$ so that
$$\sin (2\pi q_k (x+(m-1)\alpha/2))=\sin (2\pi q'_k
(y+(m-1)\alpha'/2))= 1\;.$$ Proceeding along the same lines as in
the proof of Lemma~\ref{lemat} one readily  shows that
\begin{eqnarray}
\label{dol1}u^{-1}(e^{q_{k}})\leq \frac{m}{u(m)} \leq
u^{-1}(e^{q'_{k}})&\Longrightarrow &
g_m\geq |\varphi'^{(m)}(x)|\geq \frac{m}{4u^{-1}(e^{q_{k}})},\\
\label{dol2}u^{-1}(e^{q'_{k}})\leq \frac{m}{u(m)} \leq
u^{-1}(e^{q_{k+1}})&\Longrightarrow & g_m\geq |\psi'^{(m)}(y)|\geq
\frac{m}{4u^{-1}(e^{q'_{k}})}.
\end{eqnarray}

To prove the lower bound on $\widehat{g}_m$ suppose that
$u^{-1}(e^{q_{k}})\leq m/u(m) \leq u^{-1}(e^{q'_{k}})$ (the case
of $u^{-1}(e^{q'_{k}})\leq m/u(m) \leq u^{-1}(e^{q_{k+1}})$ is
treated similarly).

\medskip
\noindent {\em Case 1.} Suppose that $m\leq(u^{-1}(e^{q_{k}}))^2$.
Set $m_0:=[u^{-1}(e^{q_{k}})]$. Then
\[u^{-1}(e^{q_{k}})/2\leq m_0\leq u^{-1}(e^{q_{k}})\leq m\]
and
\[u^{-1}(e^{q'_{k-1}})\leq q_k\leq\frac{e^{q_k/3}}{2}\leq\left(\frac{u^{-1}(e^{q_{k}})}{2}\right)^{1/4}
\leq m_0^{1/4}\leq\frac{m_0}{u(m_0)}\leq u^{-1}(e^{q_{k}}).\]
Therefore in view of (\ref{dol2}), we obtain
\[\widehat{g}_m\geq g_{m_0}\geq \frac{m_0}{4u^{-1}(e^{q'_{k-1}})}\geq \frac{u^{-1}(e^{q_{k}})}{8u^{-1}(e^{q'_{k-1}})}
\geq \frac{u^{-1}(e^{q_{k}})}{4q_k}\geq\frac{\sqrt{m}}{4\log
u(m)}.\]

\medskip
\noindent {\em Case 2.} Suppose that $m\geq(u^{-1}(e^{q_{k}}))^2$.
Then in view of (\ref{dol1}), we obtain
\[\widehat{g}_m\geq g_{m}\geq\frac{m}{4u^{-1}(e^{q_{k}})}\geq\sqrt{m}/4. \]
The desired lower bound on $\widehat{g}_m$ follows.

\medskip

To prove the upper bound on $\widehat{g}_m$ in formula
\eqref{eq-doubleest}
 we take $m=(q'_k)^2$. Then
\[\frac{6m}{u^{-1}(e^{q_{k}})}+q_k=\frac{6(q'_k)^2}{u^{-1}(e^{q_{k}})}+q_k\leq 18u(q'_k)q'_k+q_k\leq 20u(q'_k)q'_k
\leq 20u(\sqrt{m})\sqrt{m}.\] Moreover
\[\frac{6m}{u^{-1}(e^{q'_{k}})}+q'_k=\frac{6(q'_k)^2}{u^{-1}(e^{q'_{k}})}+q'_k\leq
2q'_k=2\sqrt{m}.\] Finally, from (\ref{gora}) we have
\[\widehat{g}_m\leq 20u(\sqrt{m})\sqrt{m}.\] This completes the
proof.
\end{proof}

\section{Growth of the Rudin-Shapiro shift}\label{RS}
In the present section we prove the following result.
\begin{thm} \label{thm-RS} Fix $d >0$.
There exists a bi-Lipschitz homeomorphism $\phi$ of a compact
measure metric space $(X,\rho,\mu)$ with the following properties:
\begin{itemize}
\item[{(i)}] The upper box dimension (see formula (\ref{def-box}) above)
of $(X,\rho)$  equals $d$. Furthermore, for every $\delta >0$ there
exists a $\delta$-net in $X$ containing at most $\text{const} \cdot
\delta^{-d}$ points (see Condition \ref{cond-met} above);
\item[{(ii)}] The homeomorphism $\phi$ mixes a nonzero
Lipschitz function $f: X \to \R$ with zero mean  at the speediest
possible rate, i.e.\ $(f\circ\phi^k,f)_{L_2(X,\mu)}=0$ for all
$k\neq 0$;
\item[{(iii)}] There exist $c_1,c_2 > 0$ so that the growth rate of $\phi$ satisfies
\[ c_1 \cdot n^{1/d}\leq \hG_n (\phi)  \leq c_2\cdot n^{1/d}\]
for all $n \in \N$.
\end{itemize}
\end{thm}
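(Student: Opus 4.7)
The plan is to realize $(X,\phi,\mu)$ as a two-sided symbolic dynamical system built on the Rudin–Shapiro sequence, equipped with a cylinder-based ultrametric whose exponent is calibrated to force the box dimension to equal exactly $d$. The map $\phi$ will be the shift and the witness function $f$ will be the $0$-th coordinate; each of the three conclusions then falls out from an essentially independent short argument.

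First, consider the two-sided Rudin--Shapiro subshift: let $X \subset \{\pm 1\}^{\Z}$ be the set of bi-infinite sequences all of whose finite subwords appear in the one-sided Rudin--Shapiro sequence $(r_n)_{n\geq 0}$, and let $\sigma$ be the shift. This $(X,\sigma)$ is minimal and uniquely ergodic; let $\mu$ be its unique invariant Borel probability measure. The classical flat-polynomial bound for Rudin--Shapiro gives $\big|\sum_{n=0}^{N-1} r_n r_{n+k}\big|=O(\sqrt N)$, which together with unique ergodicity forces $\int x_0\,d\mu = 0$ and $\int x_0(\sigma^k x)_0\,d\mu = 0$ for every $k\neq 0$. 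Taking $f(x):=x_0$ we then have $\|f\|_{L_2}=1$, $f\in E$, and $(f\circ\sigma^k,f)_{L_2}=0$ for all $k\neq 0$, establishing (ii).

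Next, to realize (i) at a prescribed dimension $d$, I define the symmetric agreement window $K(x,y) := \min\{n\geq 0 : x_n \neq y_n \text{ or } x_{-n} \neq y_{-n}\}$ (with $K(x,x)=\infty$) and set
\[
\rho(x,y) := (K(x,y)+1)^{-1/d}.
\]
A routine verification shows $\rho$ is an ultrametric. Open $\rho$-balls of radius $\epsilon/2$ are precisely the cylinder sets fixing coordinates on $\{-K+1,\ldots,K-1\}$ with $K \asymp \epsilon^{-d}$, and since the Rudin--Shapiro subshift has linear complexity $p(n) \asymp n$ (a classical consequence of its substitutive origin), the number of such cylinders satisfies $\mathcal{N}_\epsilon(X) \asymp \epsilon^{-d}$. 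This simultaneously yields $\overline{\dim}_B(X) = d$ and produces a $\delta$-net of size at most $\const \cdot \delta^{-d}$ for every $\delta>0$, i.e.\ Condition \ref{cond-met}.

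Finally, I control the growth. A direct computation shows that applying $\sigma^n$ shrinks the symmetric agreement window by at most $n$ on each side, so $K(\sigma^n x,\sigma^n y) \geq \max(0,K(x,y)-n)$; taking the supremum over $K$ of $((K+1)/(\max(0,K-n)+1))^{1/d}$ gives $\Gamma(\sigma^n) \leq (n+1)^{1/d}$, and by symmetry the same bound applies to $\sigma^{-n}$, so $\hG_n(\sigma) \leq c_2 n^{1/d}$. Moreover $f(x)=x_0$ is Lipschitz with $\Lip(f)\leq 2$, since $\rho(x,y)=1$ whenever $x_0\neq y_0$. For the matching lower bound I invoke Theorem \ref{thm-main}: the autocorrelation bounds can be taken $a_k=0$ for $k\geq 1$, so the constant sequence $v(n)\equiv 1$ is adjoint to $\{a_k\}$ (conditions \eqref{eq-adjoint-1}--\eqref{eq-adjoint-2} hold trivially), and \eqref{eq-main} delivers $\hG_n(\sigma) \geq c_1 n^{1/d}$.

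The main obstacle I expect is the book-keeping around the Rudin--Shapiro subshift itself: verifying that the two-sided orbit closure is minimal and uniquely ergodic with linear complexity (both upper and lower bounds on $p(n)$ are needed, so that $\mathcal{N}_\epsilon$ is pinched from both sides), and that the unique invariant measure inherits the vanishing autocorrelations from the flat-polynomial estimate. All three ingredients are standard consequences of the substitutive structure but need to be assembled carefully; once they are in hand, the Lipschitz computations on cylinder sets are essentially combinatorial.
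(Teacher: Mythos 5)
Your proposal is correct and follows essentially the same route as the paper: the two-sided Rudin--Shapiro subshift, a cylinder ultrametric calibrated so that $\rho(x,y)\asymp k(x,y)^{-1/d}$ (the paper writes it as $e^{-u(k(x,y))}$ with $u(t)=d^{-1}\log t$ for large $t$), linear complexity giving box dimension $d$, $f(x)=x_0$ with vanishing autocorrelations via unique ergodicity, the upper growth bound from the shrinking agreement window, and the lower bound from Theorem~\ref{thm-main}/\eqref{eq-power}. The one bookkeeping step you leave implicit and the paper spells out is Lemma~\ref{lem-infseq}, producing a point $y\in X$ with $y_n=v_n$ for $n\geq 0$ so that the Birkhoff average over the two-sided shift can be identified with the one-sided Rudin--Shapiro averages.
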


\medskip
\noindent Thus we confirm that the lower bound (\ref{eq-power}) in
Theorem~\ref{exam-conv} is sharp. As we shall explain below, the
homeomorphism $\phi$ can be chosen as the shift associated to the
Rudin-Shapiro sequence.

In what follows we work in the framework of the theory of symbolic
dynamical systems associated to substitutions (see \cite{Q,PF}). Let
us consider a finite alphabet $\mathcal{A}$. Denote by
$\mathcal{A}^*=\bigcup_{n\geq 1}\mathcal{A}^n$ the set of all finite
words over the alphabet $\mathcal{A}$. A {\em substitution} on
$\mathcal{A}$ is a mapping $\zeta:\mathcal{A}\to\mathcal{A}^*$. Any
substitution $\zeta$ induces two maps, also denoted by $\zeta$, one
from $\mathcal{A}^*$ to $\mathcal{A}^*$  and another from
$\mathcal{A}^\N$ to $\mathcal{A}^\N$ by putting
\[\zeta(a_0a_1\ldots a_n)=\zeta(a_0)\zeta(a_1)
\ldots\zeta(a_n)\text{ for every }a_0a_1\ldots a_n\in\mathcal{A}^*,\]
\[\zeta(a_0a_1\ldots a_n\ldots)=\zeta(a_0)\zeta(a_1)\ldots\zeta(a_n)
\ldots\text{ for every }a_0a_1\ldots a_n\ldots\in\mathcal{A}^\N.\]
If there exists a letter $a \in \mathcal{A}$ so that $\zeta(a)$
consists of at least two letters and starts with $a$, the word
$\zeta^{n}(a)$ starts with $\zeta^{n-1}(a)$ and is strictly longer
than $\zeta^{n-1}(a)$. Thus $\zeta^{n}(a)$ converges in the obvious
sense as $n \to \infty$  to an infinite word $v\in\mathcal{A}^\N$
such that $\zeta(v)=v$.

We can associate to the sequence $v$ a topological dynamical system
as follows. Let $\mathcal{L}(v)$ denote the language of the sequence
$v$, i.e.\ the set of all finite words (over the alphabet
$\mathcal{A}$) which occur in $v$. Let $X_v\subset\mathcal{A}^\Z$
stand for the set of all sequences
$x=\{x_n\}_{n\in\Z}\in\mathcal{A}^\Z$ such that $x_nx_{n+1}\ldots
x_{n+k-1}\in\mathcal{L}(v)$ for all $n\in\Z$ and $k\in\N$.
Obviously, $X_v$ is a compact subset of $\mathcal{A}^\Z$ with the
product topology and $X_v$ is invariant under the two-sided
Bernoulli shift $\phi:\mathcal{A}^\Z\to\mathcal{A}^\Z$,
$[\phi(\{x_k\}_{k\in\Z})]_n=x_{n+1}$. Therefore we can consider
$\phi$ as a homeomorphism of $X_v$.

A substitution $\zeta$ is called {\em primitive} if there exists
$k\geq 1$ such that $\zeta^{k}(a)$ contains $b$ for every
$a,b\in\mathcal{A}$. If $\zeta$ is primitive, the space $X=X_v$ does
not depend on the choice of $v$. Furthermore, the corresponding
homeomorphism $\phi: X \to X$ is minimal and uniquely ergodic.
Unique ergodicity of $\phi$ can be deduced from the analogous result
in \cite[Chapter V]{Q} for the one-sided shift: Given two words
$z,w\in \mathcal{L}(v)$, denote by $\Omega_z(w)$ the number of
appearances of $z$ as a sub-word in $w$. Unique ergodicity of the
one-sided shift yields (see \cite[Corollary IV.14]{Q}) existence of
a positive function $\omega: \mathcal{L}(v) \to (0;1]$ so that for
every $z$
\begin{equation}\label{eq-freq}
\frac{\Omega_z(w)}{\text{length}(w)} \to \omega(z)
\;\;\text{uniformly in}\;\; w \;\;\text{as}\;\;\text{length}(w)\to
\infty\;.
\end{equation}
This in turn yields, exactly as in \cite[Corollary IV.14]{Q},
unique ergodicity of the two-sided shift $\phi$.

Let us consider the Rudin--Shapiro sequence $v=\{v_n\}_{n\geq 0}$
over the alphabet $\mathcal{A}=\{-1,+1\}$ which is defined by the
relation
\[v_0=1,\;\;v_{2n}=v_n,\;\;v_{2n+1}=(-1)^nv_n\text{ for any }n\geq 0.\]
It arises from the fixed point $ABACABDB\ldots$ of the primitive
substitution $A \mapsto AB, B \mapsto AC, C \mapsto DB, D \mapsto
DC$ after replacing $A,B$ by $+1$ and $C,D$ by $-1$. As above, we
associate to the sequence $v$ the topological space $X \subset
\mathcal{A}^{\Z}$ and the two-sided shift $\phi: X \to X$.  Notice
that  $\phi$ is uniquely ergodic as a factor of the corresponding
uniquely ergodic substitution system.

\begin{proof}[Proof of Theorem~\ref{thm-RS}] Let $\mu$ be the unique $\phi$-invariant Borel
probability measure on $X$. We shall show that after a suitable
choice of a metric on $X$, the shift $\phi:X \to X$ possesses
properties (i)-(iii) stated in the theorem.

\medskip
\noindent {\sc Choosing the metric:} Fix a concave increasing
function $u: [0;+\infty) \to [0;+\infty)$ such that $u(0)=0$ and
$u(t)\to+\infty$ as $t\to+\infty$. Then define a metric $\rho$ on
$X$ by putting $\rho(x,y)=e^{-u(k(x,y))}$, where $k(x,y)=\min\{|k|:
x_k \neq y_k, k\in\Z\}$ for two distinct sequences $x,y \in X$. Of
course, $\phi$ is a bi-Lipschitz homeomorphism with $\hG_n (\phi)
\leq e^{u(n)}$.

Denote by $\{p_n(v)\}$ the complexity of the sequence $v$, that is
$p_n(v)$ is the number of different words of length $n$ occurring in
$v$. As it was shown in \cite{Al-Sh}, $p_n(v)=8(n-1)$ for every
$n\geq 2$ (in fact, a simpler estimate $n \leq  p_n(v)  \leq
\text{const} \cdot n$ is sufficient for our purposes, see
Propositions 1.1.1 and 5.4.6 in \cite{PF}).

Suppose that $u(t)=d^{-1}\log t$ for all $t$ large enough. Given
$k>2$, put $p=p_{2k-1}(v)$ and consider all possible words
$w^{(1)},\ldots,w^{(p)}$ from $\mathcal{L}(v)$ of length $2k-1$. Fix
arbitrary elements $x^{(i)} \in X$, $i=1,\ldots,p$ so that
$x^{(i)}_{-k+1}x^{(i)}_{-k+2}\ldots x^{(i)}_{k-2}x^{(i)}_{k-1} =
w^{(i)}$. Note that the points $x^{(i)}$ lie at the distance $\geq
e^{-u(k-1)}= (k-1)^{-1/d}$ one from the other. Furthermore, every
point of $X$ lies at the distance $\leq e^{-u(k)}=k^{-1/d}$ from
$x^{(i)}$ for some $i=1,\ldots,p$. Recalling that $p=16k-16$ we
conclude that the upper box dimension of $X$ equals $d$, and
moreover for every $\delta >0$ there exists a $\delta$-net in $X$
containing at most $\text{const} \cdot \delta^{-d}$ points. Thus we
get property (i) in Theorem \ref{thm-RS}.

\medskip
\noindent{\sc Mixing:} Consider a function $f:X\to\R$, $f(x)=x_0$.
Clearly, $f$ is Lipschitz with respect to $\rho$. Let us check that
$(f\circ\phi^k,f)_{L_2(X,\mu)}=0$ for all $k\neq 0$. We prove this
property by combining the unique ergodicity of $\phi$ with the
following fact, see \cite[Proposition 2.2.5]{PF}:
$$\lim_{N\to \infty} \frac{1}{N}\sum_{n=0}^{N-1}v_nv_{n+k} =
0\;\;\forall k \in \N\;.$$ Indeed, there exists a sequence $y\in
X$ such that $y_n=v_n$ for all $n\geq 0$ (see Lemma
\ref{lem-infseq} below). Then
\begin{eqnarray*}
\int_{X}f(\phi^kx)f(x)\,d\mu(x)&=&\lim_{N\to\infty}\frac{1}{N}\sum_{n=0}^{N-1}f(\phi^{k+n}y)f(\phi^ny)\\
&=&\lim_{N\to \infty} \frac{1}{N}\sum_{n=0}^{N-1}v_nv_{n+k} = 0.
\end{eqnarray*}
This proves property (ii) in Theorem \ref{thm-RS}.

\medskip
\noindent{\sc Growth bounds:} The lower bound (\ref{eq-power}) in
Theorem \ref{exam-conv} yields $\hG_n (\phi) \geq \text{const}\cdot
n^{1/d}$. On the other hand
\[\hG_n (\phi) \leq e^{u(n)}= n^{1/d}  ,\]
which yields property (iii) in Theorem \ref{thm-RS}.

\noindent This completes the proof.
\end{proof}

\begin{rem} \label{exam-infdim}  Let us modify the metric $\rho$
defined above by taking the function $u(t)$ to be of an
arbitrarily slow growth. As a result we get an example of a
bi-Lipschitz homeomorphism $\phi$ of a compact metric measure
space $(M,\rho,\mu)$ of {\it infinite} box dimension which mixes a
Lipschitz function $f$ at the speediest possible rate, that is
$(f,f\circ \phi^n)_{L_2}= 0$ for all $n \in \N$, and such that the
growth rate of $\hG_n (\phi)$ is arbitrarily slow. This
illustrates the significance of Condition \ref{cond-met} on the
metric $\rho$ for the validity of the statement of Theorem
\ref{thm-main}.
\end{rem}

\medskip
\noindent We conclude this section with the following lemma which
was used in the proof of Theorem \ref{thm-RS} above.

\begin{lemma}\label{lem-infseq} There exists a sequence $y \in X$ so that $y_n=v_n$
for all $n \geq 0$.
\end{lemma}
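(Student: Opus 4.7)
The plan is to realize $y$ as a limit in the product topology on $\mathcal{A}^{\Z}$ of two-sided sequences obtained by shifting $v$. The key dynamical input is primitivity of the Rudin--Shapiro substitution, which makes the subshift $(X_v,\phi)$ minimal; equivalently, every word in $\mathcal{L}(v)$ reappears in $v$ infinitely often (in fact with bounded gaps, so with positive frequency $\omega(\cdot)$ as in \eqref{eq-freq}).

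First I will use minimality to choose, for each $n\in\N$, an integer $k_n\geq n$ with $k_n\to\infty$ such that
\[
v_{k_n+m}=v_m \quad\text{for all } 0\leq m\leq n-1,
\]
i.e., $k_n$ is a late occurrence of the prefix $v_0v_1\ldots v_{n-1}$ inside $v$. Next I define an auxiliary two-sided sequence $y^{(n)}\in\mathcal{A}^{\Z}$ by setting $y^{(n)}_m:=v_{k_n+m}$ for every $m\geq -k_n$ and $y^{(n)}_m:=+1$ (arbitrarily) for $m<-k_n$. Since $\mathcal{A}^{\Z}$ is compact, I extract a subsequence $\{y^{(n_j)}\}$ converging to some $y\in\mathcal{A}^{\Z}$ in the product topology, and I claim this limit $y$ is the desired sequence.

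The verification has two parts. For any fixed $m\geq 0$, once $n_j>m$ the matching property of $k_{n_j}$ gives $y^{(n_j)}_m=v_{k_{n_j}+m}=v_m$, so $y_m=v_m$ in the limit. For any finite window $[-r,s]\subset\Z$, as soon as $n_j$ is large enough that $k_{n_j}>r$, the word $y^{(n_j)}_{-r}\ldots y^{(n_j)}_s$ coincides with $v_{k_{n_j}-r}\ldots v_{k_{n_j}+s}$, which is a subword of $v$ and hence belongs to $\mathcal{L}(v)$; passing to the limit, every finite subword of $y$ is in $\mathcal{L}(v)$, so $y\in X_v=X$. The only subtle step is the first one: producing a sequence $k_n\to\infty$ that simultaneously aligns ever longer prefixes of $v$. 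This is exactly what minimality of the substitution system delivers (and is what has already been invoked in the paper to deduce unique ergodicity); the remaining compactness/diagonal extraction is routine.
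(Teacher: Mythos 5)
Your proof is correct and is essentially the same as the paper's: both start from the fact (via \eqref{eq-freq}, a consequence of primitivity) that each prefix $v_0\ldots v_n$ recurs at arbitrarily late positions of $v$, and both pass to a limit of shifted copies. The only cosmetic difference is that you invoke compactness of $\mathcal{A}^{\Z}$ to extract a convergent subsequence, while the paper runs the equivalent nested diagonal extraction by hand.
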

\begin{proof}
By \eqref{eq-freq}, for every $n\in \N$ the word $v_0\ldots v_n$
appears infinitely many times as a subword in $v$. Thus we can find
a sequence of words of the form $y^{(n)}= y^{(n)}_{-n}\ldots
y^{(n)}_{-1}v_0\ldots v_n$, $n\in \N$ in the language
$\mathcal{L}(v)$.  Next we choose a collection
$\{\{n_k^l\}_{k\in\N}\}_{l\in\N}$ of increasing sequences of natural
numbers by the following inductive procedure: Since
$\{y^{(n)}_{-1}\}_{n\in\N}$ takes only two values, we can find an
increasing sequence $\{n_k^1\}_{k\in\N}$ such that
$\{y^{(n_k^1)}_{-1}\}_{k\in\N}$ is constant. Assume that the
sequence $\{n_k^l\}_{k\in\N}$ is already chosen. Choose
$\{n_k^{l+1}\}_{k\in\N}$ as a subsequence of $\{n_k^l\}_{k\in\N}$
for which $\{y^{(n_k^{l+1})}_{-l-1}\}_{k\in\N}$ is constant.  Now we
can define the desired sequence $y=\{y_k\}_{k\in\Z}\in X$ by putting
\[y_{-k}=y^{(n_k^k)}_{-k}\text{ for }k>0\text{ and }y_k=v_k\text{ for }k\geq 0.\]
\end{proof}

\section{Appendix:  Kolmogorov-Tihomirov
formula}\label{sec-ap} In this section we prove formula
\eqref{Ko-Ti-ineq}. Cover $A$ by
$n=\mathcal{N}_{\epsilon/(4R)}(A)$ balls $A_1,\ldots,A_n$ of
radius $\epsilon/(8R)$ centered at $a_1,\ldots,a_n\in A$
respectively and cover $Y$ by $m=\mathcal{N}_{\epsilon/4}(Y)$
balls $Y_1,\ldots,Y_m$ of radius $\epsilon/8$ centered at
$y_1,\ldots,y_m$ respectively. Put $I=\{1,\ldots,n\}$,
$J=\{1,\ldots,m\}$. For a map $\sigma: I \to J$ set
$$X_{\sigma} = \{ f \in \mathcal{D}^A_R(Y)\;:\; f(a_i) \in
Y_{\sigma(i)} \;\forall i \in I\}\;.$$ Obviously, $
\mathcal{D}^A_R(Y)$ is covered by $m^n$ sets $X_{\sigma}$. {\it
Warning:} some of these sets might be in fact empty.

Assume that $f,g \in X_{\sigma} \cap \mathcal{D}^A_R(Y)$. Take any
point $a \in A$. Choose $a_i$ so that $\rho_1(a,a_i) <
\epsilon/(8R)$. Then $\rho_2(f(a),f(a_i)) < \epsilon/8$ and
$\rho_2(g(a),g(a_i)) < \epsilon/8$ since the Lipschitz constant of
$f$ and $g$ is $\leq R$. Furthermore,
$\rho_2(f(a_i),y_{\sigma(i)}) < \epsilon/8$ and
$\rho_2(g(a_i),y_{\sigma(i)}) < \epsilon/8$. Thus
$\rho_2(f(a),g(a)) < \epsilon/2$. Since this is true for all
points $a$ in a compact space $A$ we conclude that
$\text{dist}(f,g) < \epsilon/2$. It follows that the set
$X_{\sigma} \cap \mathcal{D}^A_R(Y)$ is either empty, or is fully
contained in a ball of radius $\epsilon/2$ (in the sense of metric
$\text{dist}$) centered at any of its points.

Looking at all $\sigma \in J^I$, we get a covering of $
\mathcal{D}^A_R(Y)$ by at most $m^n$ of metric balls of radius
$\epsilon/2$, as required.

\begin{acknowledgements} We are grateful to A.~Katok for very useful comments on
the first draft of this paper which have led us to an application
of Theorem \ref{thm-recfin} to bi-Lipschitz ergodic theory. We
thank G.~Forni, G.~Lederman and M.~Sodin for useful discussions
and the referee for very helpful remarks and suggestions.
\end{acknowledgements}

\end{document}